\newtheorem{theorem}{Theorem}[section]
\newtheorem{corollary}[theorem]{Corollary}
\newtheorem{lemma}[theorem]{Lemma}
\newtheorem{proposition}[theorem]{Proposition}
\newtheorem{example}[theorem]{Example}
\newtheorem{remark}[theorem]{Remark}
\def\bea{\begin{eqnarray*}}
\def\eea{\end{eqnarray*}}
\def\cosmash{\mathrel\rhd\joinrel\mathrel<}
\def\ot{\otimes}
\def\ra{\rightarrow}
\def\bea{\begin{eqnarray*}}
\def\eea{\end{eqnarray*}}
\def\rhu{\rightharpoonup}
\def\lhu{\leftharpoonup}
\begin{document}

\title[Graded Frobenius algebras from tensor algebras of bimodules]{Graded Frobenius algebras from tensor algebras of bimodules}
\author[S. D\u{a}sc\u{a}lescu, C. N\u{a}st\u{a}sescu and L. N\u{a}st\u{a}sescu]{ Sorin D\u{a}sc\u{a}lescu$^1$, Constantin
N\u{a}st\u{a}sescu$^2$ and Laura N\u{a}st\u{a}sescu$^{2}$}
\address{$^1$ University of Bucharest, Faculty of Mathematics and Computer Science,
Str. Academiei 14, Bucharest 1, RO-010014, Romania} \address{ $^2$
Institute of Mathematics of the Romanian Academy, PO-Box
1-764\\
RO-014700, Bucharest, Romania}
\address{
 e-mail: sdascal@fmi.unibuc.ro, Constantin\_nastasescu@yahoo.com,
 lauranastasescu@gmail.com
}

\date{}
\maketitle

\begin{abstract}
We consider certain quotient algebras of tensor algebras of
bimodules $M$ over a finite-dimensional algebra $R$, and we
investigate Frobenius type properties of such algebras. Our main
interest is in the case where $M=R^*$, the linear dual of $R$. We
obtain a large class of Frobenius or symmetric algebras, which are
also equipped with a finite grading.\\
2020 MSC: 16D20, 16D50, 16L60, 16W50.\\
Key words: Frobenius algebra, quasi-Frobenius algebra, symmetric
algebra, graded algebra, invertible bimodule, trivial extension.
\end{abstract}

\section{Introduction and preliminaries}

Trivial extensions associated with bimodules over a ring have
occurred as relevant objects in several branches of algebra: in
homological algebra (Hochschild's study of the cohomology of a ring
with coefficients in a bimodule, Quillen's cohomological
investigation of the category of rings over a given ring), in
algebraic geometry (the ring of dual numbers and its usefulness to
schemes), in commutative rings (Nagata's principle of idealization,
the theory of Gorenstein modules), in representation theory (the
trivial extension of an algebra); see \cite{fgr} for details. If $M$
is a bimodule over a ring $R$, the trivial extension $R\cosmash M$
is the additive group $R\oplus M$ with a multiplication defined by
$(r_1,m_1)\cdot (r_2,m_2)=(r_1r_2, r_1m_2+m_1r_2)$, which makes it a
ring.

A striking example in representation theory, due to Tachikawa, shows
that if $R$ is an arbitrary finite-dimensional algebra, and $R^*$ is
its linear dual, endowed with the usual $R$-bimodule structure, then
the trivial extension $R\cosmash R^*$ is a symmetric algebra, see
\cite[16.60]{lam}; consequently, any finite dimensional algebra is
isomorphic to a factor algebra of a symmetric algebra. We aim to
uncover what is behind this example.

A more general construction is the semitrivial extension
$R\cosmash_\varphi M$ associated with an $R$-bimodule $M$ and a
morphism of bimodules $\varphi:M\ot_RM\ra R$ which is associative,
i.e., it satisfies $\varphi(m_1\ot_Rm_2)m_3=m_1\varphi(m_2\ot_Rm_3)$
for any $m_1,m_2,m_3$. The multiplication is defined in this case by
$(r_1,m_1)\cdot (r_2,m_2)=(r_1r_2+\varphi(m_1\ot_Rm_2),
r_1m_2+m_1r_2)$, see \cite{palmer}. It was proved in \cite{dnn2}
that if $R$ is a finite-dimensional algebra, then any semitrivial
extension $R\cosmash_{\varphi}R^*$ is a symmetric algebra.

In this paper we develop a construction that extends widely the
semitrivial extension. Let $M$ be a bimodule over an algebra $R$.
For any $i\geq 2$, denote by $M^{\ot_R\, i}$ the $i$-fold tensor
product $M\ot_R M\ot_R\ldots\ot_RM$; also denote $M^{\ot_R\, 0}=R$
and $M^{\ot_R\, 1}=M$. The tensor algebra of the bimodule $M$ is
$T_R(M)=\oplus_{i\geq 0}M^{\ot_R\, i}$, with multiplication induced
by tensor concatenation. Let $n\geq 2$ be an integer, and let
$\varphi:M^{\ot_R\, n}\ra R$ be a morphism of $R$-bimodules which is
associative, in the sense that $\varphi(m_1\ot_R\ldots
\ot_Rm_n)m_{n+1}=m_1\varphi(m_2\ot_R\ldots\ot_Rm_{n+1})$ for any
$m_1,\ldots,m_{n+1}\in M$. We consider the factor algebra
$A(R,M,\varphi)$ of $T_R(M)$ by the two-sided ideal generated by all
elements of the form
$m_1\ot_R\ldots\ot_Rm_n-\varphi(m_1\ot_R\ldots\ot_Rm_n)$. We show
that $A(R,M,\varphi)$ can be identified with $R\oplus M
\oplus\ldots\oplus M^{\ot_R\, (n-1)}$, with multiplication induced
by tensor concatenation for tensor monomials whose sum of length is
less than $n$, respectively by tensor concatenation followed by
applying $\varphi$ to a (in fact to any) successive tensor sequence
of length $n$. It follows that $A(R,M,\varphi)$ has a
$\mathbb{Z}_n$-graded algebra structure.

We investigate Frobenius type properties of algebras obtained by
this construction. Frobenius algebras and their variants
(quasi-Frobenius algebras, symmetric algebras) form a very
interesting class of algebras, occurring in algebra, geometry and
topology. A finite-dimensional algebra $R$ is called quasi-Frobenius
if $R$ is an injective left $R$-module, and is called Frobenius if
$R^*$ and $R$ are isomorphic as left $R$-modules; if the isomorphism
is as bimodules, then $R$ is called symmetric. Graded versions of
these concepts exist, in particular an algebra $R$ graded by a group
$G$ is called $\sigma$-graded Frobenius, where $\sigma\in G$, if
$R^*$ is isomorphic as a graded left $R$-module with the shift
$R(\sigma)$ of $R$. Such objects occur in non-commutative geometry,
for example Koszul duals of certain connected Noetherian graded
algebras \cite{smith}, and certain algebras constructed from twisted
superpotentials, related to Calabi-Yau algebras \cite{HeXia}.

All algebras, spaces and linear duals will be over a field $K$. We prove the following.\\

{\bf Proposition A.} {\it Let $R$ be a finite-dimensional algebra,
$M$  a finite-dimensional $R$-bimodule, and let $\varphi:M^{\ot_R\,
n}\ra R$ be an associative morphism of
$R$-bimodules, where $n\geq 2$. The following assertions hold.\\
{\rm (i)} If $M$ is an invertible $R$-bimodule, then
$A(R,M,\varphi)$ is $\widehat{n-1}$-faithful. If moreover,
$M^{\ot_R\,(n-1)}\simeq R^*$ as left $R$-modules, then
$A(R,M,\varphi)$ is $\widehat{n-1}$-graded
Frobenius, in particular it is Frobenius.\\
{\rm (ii)} If  $M$ is invertible, then
$A(R,M,\varphi)$ is quasi-Frobenius if and only if $R$ is quasi-Frobenius.\\
{\rm (iii)} If $\varphi$ is an isomorphism, then the
$\mathbb{Z}_n$-graded
algebra $A(R,M,\varphi)$ is strongly graded.\\
{\rm (iv)} If $\varphi$ is an isomorphism and there exists $0\leq
i\leq n-1$ such that $M^{\ot_R\, i}\simeq R^*$ as left $R$-modules,
then $A(R,M,\varphi)$ is $\hat{i}$-graded Frobenius, in
particular it is Frobenius.}\\

We are interested in the special case where $M=R^*$. If $n\geq 2$
and $\varphi:(R^*)^{\ot_R\, n}\ra R$ is the zero morphism, we denote
$A(R,R^*,\varphi)$ by $A(R,n)$; the subclass consisting of such
algebras is of particular interest. An important fact for our
further considerations is that a finite-dimensional algebra $R$ is
quasi-Frobenius if and only if $R^*$ is an invertible $R$-bimodule.
In the case where $R$ is a Frobenius algebra and $n\geq 2$, we
determine all bimodule morphisms $\varphi:R^{\ot_R\, n}\ra R$ that
are associative; wee see that some bimodule morphisms may not be
associative. However,  even when $R$ is only quasi-Frobenius, if the
order of the class of $R^*$ in the Picard
group of $R$ is a divisor of $n$, which means that $(R^*)^{\ot_R\, n}\simeq R$ as $R$-bimodules, we prove the following.\\

{\bf Theorem B.} {\it Let $R$ be a finite-dimensional algebra such
that $(R^*)^{\ot_Rn}\simeq R$ as $R$-bimodules for some  $n\geq 2$
(note that $R$ is necessarily quasi-Frobenius).
Then any morphism of $R$-bimodules $\varphi:(R^*)^{\ot_Rn}\ra R$ is associative.}\\

We first prove Theorem B in the case where $R$ is Frobenius, by
investigating the Nakayama automorphism of $R$. In the case where
$R$ is quasi-Frobenius, we consider a basic algebra $S$ of $R$,
which is Frobenius and Morita equivalent to $R$. Then there is a
monoidal equivalence between the categories of bimodules over $R$
and $S$, which we use to transfer  the property to be proved in
Theorem B from $S$ to $R$.

In the situation where $\varphi:(R^*)^{\ot_Rn}\ra R$ is an
isomorphism of bimodules, thus associative by Theorem B, we show
that the strongly graded algebra $A(R,R^*,\varphi)$  enjoys good Frobenius properties.\\

{\bf Theorem C.} {\it Let $R$ be a finite-dimensional algebra such
that there is an isomorphism of $R$-bimodules
$\varphi:(R^*)^{\ot_R\, n}\ra R$ for some $n\geq 2$. Then\\
{\rm (i)} $A(R,R^*,\varphi)$ is $\hat{1}$-graded Frobenius, and it
is a
symmetric algebra.\\
{\rm (ii)} If moreover, $R$ is Frobenius, then $A(R,R^*,\varphi)$ is
$\hat{i}$-graded Frobenius for any $0\leq i<n$.}\\

We also investigate Frobenius properties of $A(R,R^*,\varphi)$ in
the case where $\varphi$ is not necessarily an isomorphism, provided that some additional properties are satisfied.\\

{\bf Theorem D.} {\it Let $R$ be a finite-dimensional algebra, and
let $\varphi:(R^*)^{\ot_R\, n}\ra R$ be an associative morphism of
bimodules, where $n\geq 2$. The following hold.\\
 {\rm (i)} If
$(R^*)^{\ot_R\, (n-2)}\simeq R$ as bimodules, then
$A(R,R^*,\varphi)$ is a symmetric algebra.\\
{\rm (ii)} If $R$ is a Frobenius algebra, then $A(R,R^*,\varphi)$ is
$\widehat{n-1}$-graded Frobenius, in particular it is a Frobenius
algebra. Moreover, if $\lambda$ is a Frobenius form on $R$, with
associated Nakayama automorphism $\nu$, then the linear map
$\Lambda:A(R,R^*,\varphi)\ra K$ such that
$$\Lambda((r_1\rhu \lambda)\ot_R\ldots\ot_R(r_{n-1}\rhu
\lambda))=\lambda(r_1\nu(r_2)\cdots\nu^{n-2}(r_{n-1}))$$ for any
$r_1,\ldots,r_{n-1}\in R$, and $\Lambda$ vanishes on any homogeneous
component of degree $\neq \widehat{n-1}$ of $A(R,R^*,\varphi)$, is a
Frobenius form on $A(R,R^*,\varphi)$, and the Nakayama automorphism
$\mathcal{N}$ of $A(R,R^*,\varphi)$ associated with $\Lambda$ is
given by
$$\mathcal{N}((r_1\rhu \lambda)\ot_R\ldots\ot_R(r_{i}\rhu
\lambda))=(\nu^{2-n}(r_1)\rhu
\lambda)\ot_R\ldots\ot_R(\nu^{2-n}(r_{i})\rhu \lambda)$$ for any
$1\leq i\leq n-1$, and any $r_1,\ldots,r_i\in R$, and
$\mathcal{N}(r)=\nu^{2-n}(r)$ for any $r\in R$.}\\

 On the other hand, If $R$ is
Frobenius, we use Theorem D (ii) for obtaining a criterion for
$A(R,R^*,\varphi)$ to be a symmetric algebra. As a consequence, we
show the following.\\

{\bf Corollary E.} {\it Let $R$ be a Frobenius algebra, $n\geq 2$,
and let $\varphi:(R^*)^{\ot_R\, n}\ra R$ be an associative morphism
of bimodules. Then

{\rm (i)} If $R$ is a local algebra, then $A(R,R^*,\varphi)$ is a
symmetric algebra if and only if either $(R^*)^{\ot_R\, (n-2)}\simeq
R$ as $R$-bimodules, or $\varphi$ is an isomorphism.

{\rm (ii)} $A(R,n)$ is a symmetric algebra
if and only if $(R^*)^{\ot_R\, (n-2)}\simeq R$ as $R$-bimodules. }\\

We give an example to show that the condition that $R$ is local
cannot be omitted in Corollary E (i).

If $R$ is quasi-Frobenius, then an algebra of type
$A(R,R^*,\varphi)$ is quasi-Frobenius by Proposition A (ii), however
we show that it might not be Frobenius. Let $R$ be a finite
dimensional quasi-Frobenius algebra and $e_1,\ldots, e_q$ be
idempotents of $R$ such that $Re_1,\ldots,Re_q$ is a system of
representatives for the isomorphism types of principal
indecomposable $R$-modules; let $m_1,\ldots,m_q$ be their
multiplicities in $R$. Then there is a permutation $\pi$ of
$1,\ldots,q$, called the Nakayama permutation, such that ${\rm
top}(Re_i)\simeq {\rm soc}(Re_{\pi(i)})$ for any $1\leq i\leq q$,
where for an $R$-module $M$ we denote by ${\rm soc}(M)$ its socle
and by ${\rm
top}(M)=M/J(R)M$. With these notations, we prove the following.\\

{\bf Theorem F.} {\it Let $R$ be  quasi-Frobenius and $n\geq 2$.
Then $A(R,n)$ is Frobenius if and only if $m_i=m_{\pi^{n-2}(i)}$ for any $1\leq i\leq q$. }\\

We mentioned that $A(R,2)$, the trivial extension of $R^*$ by $R$,
is a symmetric algebra for any finite-dimensional algebra $R$. We
give examples showing that if $n\geq 3$, $A(R,n)$ may be not even
quasi-Frobenius (for certain $R$ which is not quasi-Frobenius). On
the other hand, we give examples of $R$ which is not
quasi-Frobenius, while $A(n,R)$ is symmetric. Thus the case $n=2$ is
an exceptional situation.

 For basic facts and notation on Frobenius algebras we refer to
\cite{lam} and \cite{sy}, while for things about graded algebras to
\cite{nvo}.

\section{Graded Frobenius algebras and graded algebras from
bimodules}

 {\it (Quasi-)Frobenius algebras and symmetric algebras.}

Let $R$ be a finite-dimensional $K$-algebra. Denote by $\rhu$ and
$\lhu$ the usual left and right actions of $R$ on the dual space
$R^*$, which make $R^*$ an $R$-bimodule; thus $(r\rhu
r^*)(s)=r^*(sr)$ and $(r^*\lhu r)(s)=r^*(rs)$ for any $r^*\in R^*$
and $r,s\in R$. $R$ is called Frobenius if $R^*$ is isomorphic to
$R$ as a left $R$-module (or equivalently, as a right $R$-module).
If $R^*$ is isomorphic to $R$ as an $R$-bimodule, then $R$ is called
symmetric.

A basic fact is that $R$ is Frobenius if and only if there exists
$\lambda\in R^*$ such that $R\rhu \lambda=R^*$; such a $\lambda$ is
called a Frobenius form on $R$, and it also satisfies $\lambda \lhu
R=R^*$. The Nakayama automorphism $\nu:R\ra R$ associated with
$\lambda$ is the unique map such that $\lambda \lhu r=\nu(r)\rhu
\lambda$ for any $r\in R$; $\nu$ is an algebra automorphism, and
$R^*$ is isomorphic as an $R$-bimodule to $_1R_\nu$, where for an
$R$-bimodule $M$ and two automorphisms $\alpha$ and $\beta$ of $R$,
we denote by $_\alpha M_\beta$ the $R$-bimodule with underlying
space $M$, left action $r*m=\alpha(r)m$, and right action
$m*r=m\beta(r)$.

An algebra $R$ is symmetric if and only if there is a Frobenius form
$\lambda$ such that $\lambda (rs)=\lambda (sr)$ for any $r,s\in R$;
in this situation, the associated Nakayama automorphism is inner.

A finite-dimensional algebra $R$ is called quasi-Frobenius if it is
self-injective. By \cite[Corollary 2.3]{dnn2}, this is equivalent to
$R^*$ being an invertible $R$-bimodule. We recall that an
$R$-bimodule is called invertible
if there is a bimodule $N$ such that $M\ot_RN\simeq N\ot_RM\simeq R$ as $R$-bimodules. \\

 {\it Graded algebras and $\sigma$-graded Frobenius
conditions.}

Let $G$ be a group with identity element $e$. A $G$-graded algebra
is an algebra $A$ with a linear decomposition $A=\oplus_{g\in
G}A_g$, such that $A_gA_h\subset A_{gh}$ for any $g,h\in G$. If
$\sigma\in G$, $A$ is called $\sigma$-faithful if for any $g\in G$
and any $a_g\in A_g$ such that $A_{\sigma g^{-1}}a_g=0$, we have
that $a_g=0$.

A $G$-graded algebra $A$ is strongly graded if $A_gA_h=A_{gh}$ for
any $g,h\in G$, or equivalently, if $1\in A_gA_{g^{-1}}$ for any
$g\in G$. A strongly graded algebra is $\sigma$-faithful for any
$\sigma\in G$.

If $A$ is a finite-dimensional $G$-graded algebra, then $A^*$ is a
graded  $A$-bimodule, with the grading given by
$$(A^*)_g=\{ a^*\in A^*\;|\; a^*(A_h)=0 \mbox{ for any }h\neq
g^{-1}\}.$$ If $\sigma\in G$, denote by $A(\sigma)$ the graded left
$A$-module which is just $A$ as a left $A$-module, and has a grading
shifted by $\sigma$, i.e., $A(\sigma)_g=A_{g\sigma}$ for any $g\in
G$. Then $A$ is called $\sigma$-graded Frobenius if $A^*\simeq
A(\sigma)$ as graded left $A$-modules. Clearly, if $A$ is
$\sigma$-graded Frobenius, then $A$ is a Frobenius algebra. We will
need the following characterization of $\sigma$-graded Frobenius
algebras.

\begin{proposition}
A finite-dimensional graded algebra $A$ is $\sigma$-graded Frobenius
if and only if it has a Frobenius form $\lambda$ of degree
$\sigma^{-1}$ in $A^*$.
\end{proposition}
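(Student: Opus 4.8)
The plan is to exploit the standard identification of morphisms of left $A$-modules $A\ra A^*$ with elements of $A^*$, keeping track of degrees throughout. The one input beyond this is a bookkeeping lemma about how the left action of $A$ on $A^*$ shifts degrees, which I would establish first: for $a\in A_h$ and $a^*\in(A^*)_k$, one has $a\rhu a^*\in(A^*)_{hk}$. This is immediate from the definitions: for $s\in A_l$ we have $(a\rhu a^*)(s)=a^*(sa)$ with $sa\in A_{lh}$, so the value is $0$ unless $lh=k^{-1}$, i.e. unless $l=(hk)^{-1}$; hence $a\rhu a^*$ vanishes on every homogeneous component of $A$ except $A_{(hk)^{-1}}$, which is exactly the condition $a\rhu a^*\in(A^*)_{hk}$.

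For the ``only if'' direction, suppose $f:A(\sigma)\ra A^*$ is an isomorphism of graded left $A$-modules and set $\lambda=f(1)$. Since $1$ lies in $A(\sigma)_{\sigma^{-1}}$ (because $A(\sigma)_{\sigma^{-1}}=A_e$) and $f$ preserves degrees, $\lambda\in(A^*)_{\sigma^{-1}}$. Since $1$ generates $A(\sigma)$ as a left $A$-module and $f$ is left $A$-linear, $f(a)=a\rhu\lambda$ for all $a\in A$, so surjectivity of $f$ gives $A\rhu\lambda=A^*$; thus $\lambda$ is a Frobenius form on $A$ of degree $\sigma^{-1}$.

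For the converse, given a Frobenius form $\lambda\in(A^*)_{\sigma^{-1}}$, I would define $f:A\ra A^*$ by $f(a)=a\rhu\lambda$. It is a morphism of left $A$-modules, surjective because $A\rhu\lambda=A^*$, hence bijective since $\dim_K A=\dim_K A^*<\infty$. By the degree-shift lemma (with $k=\sigma^{-1}$), for each $g\in G$ we get $f(A(\sigma)_g)=f(A_{g\sigma})\subseteq(A^*)_{g\sigma\sigma^{-1}}=(A^*)_g$, so $f$ carries each homogeneous component of $A(\sigma)$ into the corresponding component of $A^*$. A dimension count over the grading (the containments $f(A(\sigma)_g)\subseteq(A^*)_g$ are forced to be equalities once the total dimensions agree, as $f$ is bijective) then shows that $f$ is an isomorphism of graded left $A$-modules, so $A$ is $\sigma$-graded Frobenius. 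I do not expect a real obstacle: the degree-shift lemma is the only substantive point, and the rest is the usual Frobenius-form plus finite-dimensionality argument combined with a graded dimension count.
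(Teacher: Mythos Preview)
Your proof is correct and follows essentially the same approach as the paper: both directions use the standard bijection between left $A$-module maps $A\to A^*$ and elements of $A^*$, with $\lambda=f(1)$ and $f(a)=a\rhu\lambda$. You have simply made explicit the degree-shift verification and the dimension argument that the paper leaves to the reader.
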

\begin{proof}
If $\varphi:A(\sigma)\ra A^*$ is an isomorphism of graded left
$A$-modules, then $\lambda=\varphi(1)$ is a Frobenius form of degree
$\sigma^{-1}$. Conversely, if $\lambda$ is such a form, then the map
$\varphi:A(\sigma)\ra A^*$, $\varphi(r)=r\rhu \lambda$, is an
isomorphism of graded left $A$-modules.
\end{proof}

Note that if $A$ is $\sigma$-graded Frobenius and $\sigma$ lies in
the centre of $G$, then the associated Nakayama automorphism $\nu$
preserves degree.

 We recall from \cite[Theorem 4.3]{dnn1} that a
finite-dimensional $G$-graded algebra $A$ is $\sigma$-graded
Frobenius if and only if it is $\sigma$-faithful and $A_\sigma\simeq
A_e^*$ as left $A_e$-modules. For such an $A$, a Frobenius form
$\lambda\in A^*$ of degree $\sigma^{-1}$ can be obtained by taking
an isomorphism $\theta:A_\sigma\ra A_e^*$ of left $A_e$-modules,
and defining $\lambda(x)=\theta(x)(1)$ for any $x\in A_\sigma$, and $\lambda(x)=0$ for any homogeneous $x$ of degree $\neq \sigma$. \\

 {\it Graded algebras from bimodules}

Let $R$ be an algebra and let $M$ be an $R$-bimodule.  The tensor
algebra of the bimodule $M$ is the space $T_R(M)=\oplus_{n\geq
0}M^{\ot_R\, n}$, with multiplication (which we occasionally denote
by $\cdot$) induced by tensor concatenation and the left and right
actions of $R$ on $M$; this is a positively graded algebra with
homogeneous component of degree $n$ being $T_R(M)_n=M^{\ot_R\, n}$
for any $n\geq 0$.

If $n\geq 2$, a bimodule morphism $\varphi:M^{\ot_R\, n}\ra R$ is
called associative if
\begin{equation} \label{defassociative}
\varphi(m_1\ot_R\ldots
\ot_Rm_n)m_{n+1}=m_1\varphi(m_2\ot_R\ldots\ot_Rm_{n+1})
\end{equation} for any $m_1,\ldots,m_{n+1}\in M$. If $\varphi$ is associative, it can be easily
proved by induction on $p$ that for any $r\geq n+1$, any $1\leq
p\leq r-n$ and any $m_1,\ldots,m_r\in M$ we have
\begin{equation} \label{generalizedassoc}
\begin{aligned}
 m_1\ot_R\ldots
\ot_Rm_{p-1}\ot_R\varphi(m_p\ot_R\ldots\ot_Rm_{p+n-1})m_{p+n}\ot_Rm_{p+n+1}\ot_R\ldots
m_r=\\
\varphi(m_1\ot_R\ldots \ot_Rm_n)m_{n+1}\ot m_{n+2}\ot_R\ldots
\ot_Rm_r.
\end{aligned}
\end{equation}

If $\varphi:M^{\ot_Rn}\ra R$ is associative, then we have a Morita
context $(R,R,M,M^{\ot_R(n-1)},[,],(,))$ connecting the rings $R$
and $R$, where both Morita maps $[,]$ and $(,)$ are induced by
$\varphi$ in the obvious way. As a consequence, if $\varphi$ is
surjective, then it is an isomorphism of $R$-bimodules, in
particular $M$ is an invertible bimodule.

Now let $M$ be an $R$-bimodule, and let $\varphi:M^{\ot_R\, n}\ra R$
be an associative morphism of bimodules, where $n\geq 2$. Let $I$ be
the two-sided ideal of $T_R(M)$ generated by all elements of the
form $m_1\ot_R\ldots \ot_Rm_n-\varphi(m_1\ot_R\ldots\ot_Rm_n)$, with
$m_1,\ldots,m_n\in M$. We consider the factor algebra $T_R(M)/I$,
which we denote by $A(R,M,\varphi)$. In order to describe this
factor algebra, we need the following.

\begin{lemma}
{\rm (i)} $I$ is the left ideal generated by all elements of the
form $m_1\ot_R\ldots \ot_Rm_n-\varphi(m_1\ot_R\ldots\ot_Rm_n)$, with
$m_1,\ldots,m_n\in M$.\\
{\rm (ii)} $I\cap (T_R(M)_0+T_R(M)_1+\ldots +T_R(M)_{n-1})=0$.\\
{\rm (iii)} For any $u\in T_R(M)$ there exists $u'\in I$ such that
$u-u'\in T_R(M)_0+T_R(M)_1+\ldots +T_R(M)_{n-1}$.
\end{lemma}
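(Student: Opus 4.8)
The plan is to prove (i) first, then to deduce (ii) from it by constructing an explicit linear retraction $\Phi$ of $T_R(M)$ onto $N:=T_R(M)_0+T_R(M)_1+\ldots+T_R(M)_{n-1}$ which vanishes on $I$, and to obtain (iii) by a straightforward induction on degree. Parts (i) and (iii) are routine; part (ii) is where the work lies.

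For (i), I would let $L$ be the \emph{left} ideal of $T_R(M)$ generated by all elements $g=m_1\ot_R\ldots\ot_Rm_n-\varphi(m_1\ot_R\ldots\ot_Rm_n)$. Since $L\subseteq I$, it suffices to show $L$ is also a right ideal, and because $T_R(M)$ is generated as an algebra by $R$ and $M$, it is enough to check that $gr$ and $gm$ lie in $L$ for every such $g$ and all $r\in R$, $m\in M$. Indeed $gr=m_1\ot_R\ldots\ot_R(m_nr)-\varphi(m_1\ot_R\ldots\ot_R(m_nr))$ is again a generator by right $R$-linearity of $\varphi$, while the associativity relation (\ref{defassociative}) rewrites $gm=m_1\ot_R\ldots\ot_Rm_n\ot_Rm-\varphi(m_1\ot_R\ldots\ot_Rm_n)m$ as $m_1\cdot\bigl(m_2\ot_R\ldots\ot_Rm_n\ot_Rm-\varphi(m_2\ot_R\ldots\ot_Rm_n\ot_Rm)\bigr)$, that is, $m_1$ times a generator; hence for $ag\in L$ one gets $(ag)r=a(gr)\in L$ and $(ag)m=(am_1)\cdot(\text{generator})\in L$, so $L=I$. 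Part (iii) is the statement $T_R(M)=N+I$, which I would prove by showing $M^{\ot_R\,k}\subseteq N+I$ for all $k$ by strong induction on $k$: the cases $k\le n-1$ are clear, and for $k\ge n$ the element $\bigl(m_1\ot_R\ldots\ot_Rm_n-\varphi(m_1\ot_R\ldots\ot_Rm_n)\bigr)\cdot(m_{n+1}\ot_R\ldots\ot_Rm_k)\in I$ equals $m_1\ot_R\ldots\ot_Rm_k$ minus $\bigl(\varphi(m_1\ot_R\ldots\ot_Rm_n)m_{n+1}\bigr)\ot_Rm_{n+2}\ot_R\ldots\ot_Rm_k\in M^{\ot_R\,(k-n)}$, which lies in $N+I$ by the induction hypothesis.

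The substantial point is (ii). I would define a linear map $\Phi:T_R(M)\ra N$ to be the identity on $T_R(M)_i$ for $i\le n-1$ and, for $k\ge n$, recursively by
$$\Phi(m_1\ot_R\ldots\ot_Rm_k)=\Phi\bigl(\varphi(m_1\ot_R\ldots\ot_Rm_n)m_{n+1}\ot_Rm_{n+2}\ot_R\ldots\ot_Rm_k\bigr),$$
the argument on the right belonging to $M^{\ot_R\,(k-n)}$. The assignment $m_1\ot_R\ldots\ot_Rm_k\mapsto\varphi(m_1\ot_R\ldots\ot_Rm_n)m_{n+1}\ot_R\ldots\ot_Rm_k$ is well defined because $\varphi$ is a morphism of $R$-bimodules, so $\Phi$ is a well-defined linear map, and it is the identity on $N$. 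Granting $\Phi(I)=0$, any $u\in I\cap N$ satisfies $u=\Phi(u)=0$, which is (ii). To see $\Phi(I)=0$, note that by (i) the ideal $I$ is spanned by the elements $t\cdot g$ with $t=a_1\ot_R\ldots\ot_Ra_p$ a tensor monomial and $g=m_1\ot_R\ldots\ot_Rm_n-\varphi(m_1\ot_R\ldots\ot_Rm_n)$ a generator; for $p\ge1$,
$$t\cdot g=a_1\ot_R\ldots\ot_Ra_p\ot_Rm_1\ot_R\ldots\ot_Rm_n-a_1\ot_R\ldots\ot_Ra_{p-1}\ot_R\bigl(a_p\varphi(m_1\ot_R\ldots\ot_Rm_n)\bigr),$$
and the case $p=0$ is immediate from left $R$-linearity of $\varphi$. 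The key observation is that $\Phi$ may be evaluated by contracting \emph{any} length-$n$ block of consecutive tensor factors, not only the leftmost one: by (\ref{generalizedassoc}) the element $\varphi(m_1\ot_R\ldots\ot_Rm_n)m_{n+1}\ot_R\ldots\ot_Rm_k$ is unchanged if the contracted block is moved, and one application of (\ref{defassociative}) to the rightmost block turns absorption of the resulting scalar on the right into absorption on the left, so that for $k\ge n+1$
$$\varphi(m_1\ot_R\ldots\ot_Rm_n)m_{n+1}\ot_R\ldots\ot_Rm_k=m_1\ot_R\ldots\ot_Rm_{k-n-1}\ot_R\bigl(m_{k-n}\varphi(m_{k-n+1}\ot_R\ldots\ot_Rm_k)\bigr).$$
Applying this to $a_1\ot_R\ldots\ot_Ra_p\ot_Rm_1\ot_R\ldots\ot_Rm_n$ with the last $n$ factors as the contracted block gives $\Phi(a_1\ot_R\ldots\ot_Ra_p\ot_Rm_1\ot_R\ldots\ot_Rm_n)=\Phi\bigl(a_1\ot_R\ldots\ot_Ra_{p-1}\ot_R(a_p\varphi(m_1\ot_R\ldots\ot_Rm_n))\bigr)$, whence $\Phi(t\cdot g)=0$, and therefore $\Phi(I)=0$.

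The main obstacle is exactly this relocation step in the proof of (ii). The retraction $\Phi$ is built by contracting length-$n$ blocks from the left, whereas the generators of $I$, once written as $t\cdot g$, carry their contractible block at the right end; reconciling the two is precisely what the generalized associativity (\ref{generalizedassoc}) provides, together with one use of (\ref{defassociative}). Once that identity is available, the remaining points — well-definedness of the various $R$-balanced maps and the two bracket computations above — are routine manipulations of tensor concatenation.
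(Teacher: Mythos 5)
Your argument is correct, and parts (i) and (iii) follow essentially the paper's own route: for (i) you use the associativity relation to rewrite $g\cdot m$ as $m_1\cdot(\text{generator})$ exactly as the paper does, and your strong induction on tensor length for (iii) is the same reduction the paper performs by induction on the top degree of $u$. The genuine divergence is in (ii). The paper's proof is a short leading-term argument: writing a putative element $y\in I\cap N$ as $\sum_j z_j\cdot g_j$ with $z_j$ homogeneous, it isolates the highest degree $d_0$ occurring, observes that the degree-$(d_0+n)$ component of $y$ is $\sum_{j\in J_{d_0}}z_j\ot_R(m_{j1}\ot_R\ldots\ot_Rm_{jn})$ and must vanish since $y\in N$, and then applies ${\rm Id}^{\ot_R d_0}\ot_R\varphi$ to conclude that the matching terms $\sum_{j\in J_{d_0}}z_j\varphi(\ldots)$ vanish as well, so $y_{d_0}=0$ -- a contradiction. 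That argument needs only that $\varphi$ is a bimodule map (associativity enters solely through (i)). You instead construct a normal-form retraction $\Phi$ of $T_R(M)$ onto $N$ by iterated contraction of the leftmost length-$n$ block, and kill $I$ by showing, via (\ref{generalizedassoc}) plus one use of (\ref{defassociative}), that contracting the leftmost block of $t\cdot(m_1\ot_R\ldots\ot_Rm_n)$ agrees with absorbing $\varphi(m_1\ot_R\ldots\ot_Rm_n)$ on the right of $t$; this leans on associativity of $\varphi$ more heavily and is slightly longer, but it buys you an explicit $K$-linear splitting $T_R(M)\ra N$ of the quotient map, i.e., a canonical normal form for elements of $A(R,M,\varphi)$, which the paper's contradiction argument does not produce. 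All the well-definedness checks you flag (balancedness of the contraction maps, the two bracket computations, the $p=0$ case) are indeed routine and correct as stated.
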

\begin{proof}
(i) Let $z=m_1\ot_R\ldots \ot_Rm_n-\varphi(m_1\ot_R\ldots\ot_Rm_n)$
and $m\in M$. The statement follows from
 \bea z\cdot m&=&
m_1\ot_R\ldots \ot_Rm_n\ot_R m -\varphi(m_1\ot_R\ldots\ot_Rm_n)m\\
&=&m_1\cdot (m_2\ot_R\ldots \ot_Rm_n\ot_R m)
-m_1\varphi(m_2\ot_R\ldots\ot_Rm_n\ot_Rm)\\
&=&m_1\cdot (m_2\ot_R\ldots \ot_Rm_n\ot_R m
-\varphi(m_2\ot_R\ldots\ot_Rm_n\ot_Rm) ) \eea and the fact that
$T_R(M)$ is generated by $T_R(M)_1=M$.

(ii) Let $$y=\sum_{j\in J}z_j\cdot
(m_{j1}\ot_R\ldots\ot_Rm_{jn}-\varphi
(m_{j1}\ot_R\ldots\ot_Rm_{jn}))\in T_R(M)_0+T_R(M)_1+\ldots
+T_R(M)_{n-1},$$ where the $m_{ji}$'s lie in $M$ and $(z_j)_{j\in
J}$ is a family of elements of $T_R(M)$; we may assume that each
$z_j$ is a homogeneous element. For any $d$, let $J_d$ be the set of
all $j$ such that $z_j$ has degree $d$, and denote $y_d=\sum_{j\in
J_d}z_j\cdot (m_{j1}\ot_R\ldots\ot_Rm_{jn}-\varphi
(m_{j1}\ot_R\ldots\ot_Rm_{jn}))$. Clearly, $y=\sum_dy_d$. If $y\neq
0$, let $d_0$ be the greatest integer with $y_{d_0}\neq 0$. Then
$$y=\sum_{d\leq d_0}y_d=\sum_{j\in J'}z_j\cdot
(m_{j1}\ot_R\ldots\ot_Rm_{jn})-\sum_{j\in J'}z_j\varphi
(m_{j1}\ot_R\ldots\ot_Rm_{jn}),$$ where $J'=\cup_{d\leq d_0}J_d$,
and in this sum, the highest degree terms are $z_j\cdot
(m_{j1}\ot_R\ldots\ot_Rm_{jn})$ with $j\in J_{d_0}$, and they have
degree $d_0+n$. As any element in $T_R(M)_0+T_R(M)_1+\ldots
+T_R(M)_{n-1}$ has all non-zero homogeneous components of degrees at
most $n-1$, we obtain that $\sum_{j\in J_{d_0}}z_j\cdot
(m_{j1}\ot_R\ldots\ot_Rm_{jn})=0$. Applying ${\rm
Id}^{\ot_R\,d}\ot_R\varphi$, we get $\sum_{j\in J_{d_0}}z_j\varphi
(m_{j1}\ot_R\ldots\ot_Rm_{jn})=0$, so then $$y_{d_0}=\sum_{j\in
J_{d_0}}z_j\cdot (m_{j1}\ot_R\ldots\ot_Rm_{jn})-\sum_{j\in
J_{d_0}}z_j\varphi (m_{j1}\ot_R\ldots\ot_Rm_{jn})=0,$$ a
contradiction. We conclude that $y$ must be zero.

(iii) We proceed by induction on the highest degree $d$ of a
non-zero homogeneous component $u_d$ of $u$. It is obvious if $d<n$.
For the induction step, if $d\geq n$ and $u_d=\sum_j
m_{j1}\ot_R\ldots \ot_Rm_{jd}$, let us take $$z=\sum_j
(m_{j1}\ot_R\ldots \ot_Rm_{j,d-n})\cdot (m_{j,d-n+1}\ot_R\ldots
\ot_Rm_{j,d}-\varphi(m_{j,d-n+1}\ot_R\ldots \ot_Rm_{j,d})),$$ which
lies in $I$, and apply the induction hypothesis to $u-z$.
\end{proof}

As a consequence of the previous lemma, we can identify
$$A(R,M,\varphi)=R\oplus M\oplus (M\ot_RM)\oplus\ldots\oplus
M^{\ot_R\, (n-1)},$$ and the multiplication is such that for any
$1\leq i,j\leq n-1$ and $m_1,\ldots,m_i,p_1,\ldots,p_j\in M$,
$$(m_1\ot_R\ldots\ot_Rm_i)\cdot(p_1\ot_R\ldots\ot_Rp_j)=m_1\ot_R\ldots\ot_Rm_i\ot_Rp_1\ot_R\ldots
\ot_Rp_j$$ if $i+j<n$, and
$$(m_1\ot_R\ldots\ot_Rm_i)\cdot(p_1\ot_R\ldots\ot_Rp_j)=\varphi(m_1\ot_R\ldots
\ot_Rm_i\ot_Rp_1\ot_R\ldots\ot_Rp_{n-i})p_{n-i+1}\ot_R\ldots\ot_Rp_j$$
whenever $i+j\geq n$; the elements of $R$ act on each component
according to the $R$-bimodule structure. In other words, the
multiplication is the tensor concatenation, followed by applying
$\varphi$ on (any) $n$ successive tensor position if the length is
at least $n$.

Clearly, $A(R,M,\varphi)$ is a $\mathbb{Z}_n$-graded algebra, whose
homogeneous component of degree $\hat{i}$ is $M^{\ot_Ri}$ for any
$0\leq i\leq n-1$; here we denote by $\hat{i}$ the class of $i$
modulo $n$, and $\mathbb{Z}_n$ is the additive group of integers
modulo $n$. If both $R$ and $M$ have finite dimension, then
$A(R,M,\varphi)$ is a finite-dimensional algebra. We note that
$A(R,M,\varphi)_{\hat{i}}A(R,M,\varphi)_{\hat{j}}=A(R,M,\varphi)_{\widehat{i+j}}$
for any $0\leq i,j$ with $i+j<n$, and
$A(R,M,\varphi)_{\hat{i}}A(R,M,\varphi)_{\widehat{n-i}}={\rm Im}(\varphi)$ for any $0<i<n$. \\

{\bf Proof of Proposition A.} Denote for simplicity $A(R,M,\varphi)$ by $A$.\\
(i) Let $z\in A_{\hat{i}}$ be a homogeneous element, where $0\leq
i\leq n-1$, and assume that $A_{\widehat{n-1-i}}z=0$. Thus
$M^{\ot_R\,(n-i-1)}\ot_Rz=0$ (inside
$M^{\ot_R\,(n-i-1)}\ot_RM^{\ot_R\, i}$). Since $M$ is invertible as
a bimodule, then so is $M^{\ot_R\,(n-i-1)}$, and denote by $V$ its
inverse. We then have $V\ot_RM^{\ot_R\,(n-i-1)}\ot_Rz=0$, and using
the isomorphism $V\ot_RM^{\ot_R\,(n-i-1)}\ot_RM^{\ot_R\, i}\simeq
M^{\ot_R\, i}$, we get that $z=0$.

If $M^{\ot_R\,(n-1)}\simeq R^*$ as left $R$-modules, then $A$ is
$\widehat{n-1}$-graded Frobenius by
\cite[Theorem 4.3]{dnn1}.\\
(ii) Assume that $R$ is quasi-Frobenius. Recall from \cite[Section
2.5]{nvo} that the coinduced functor ${\rm Coind}$ associates to any
$R$-module $N$ the graded $A$-module ${\rm Coind}(N)={\rm
Hom}_R(A,N)$, with the grading
$${\rm Coind}(N)_{\hat{j}}=\{ f\in {\rm Hom}_R(A,N)|\,
f(A_{\hat{i}})=0\mbox{ for any }\hat{i}\neq \widehat{n-j}\}.$$ There
is a natural morphism of graded left $A$-modules $$F:A\ra {\rm
Coind}(A_{\widehat{n-1}})(\hat{1}),
F(a_{\hat{i}})(b)=(ba_{\hat{i}})_{\widehat{n-1}},$$ for any
$\hat{i}\in \mathbb{Z}_n, a_{\hat{i}}\in A_{\hat{i}}$ and $b\in A$.
By (i), $A$ is $\widehat{n-1}$-faithful, and then $F$ is injective
by \cite[Proposition 2.6.3 (e)]{nvo}.

Since $M$ is an invertible $R$-bimodule, then so is $M^{\ot_R\, j}$
for any $1\leq j\leq n-1$. As an invertible bimodule, $M^{\ot_R\,
(n-1)}$ is a projective left $R$-module, and since $R$ is
quasi-Frobenius, $A_{\widehat{n-1}}=M^{\ot_R\, (n-1)}$ is also an
injective left $R$-module, see \cite[Theorem 15.9]{lam}. By
\cite[Proposition 2.6.3 (e)]{nvo}, ${\rm Coind}(A_{\widehat{n-1}})$
is an injective object in the category of graded left $A$-modules,
thus also injective as a left $A$-module by \cite[Corollary
2.5.2]{nvo}. We show that $F$ is an isomorphism, and then $A$ is an
injective left $A$-module, too, which means that $A$ is
quasi-Frobenius.  For this aim, it is enough to check that the
homogeneous components of the same degree of $A$ and ${\rm
Coind}(A_{\widehat{n-1}})(\hat{1})$ have the same dimension. For any
$0\leq j\leq n-1$,
 $${\rm
Coind}(A_{\widehat{n-1}})(\hat{1})_{\hat{j}}={\rm
Coind}(A_{\widehat{n-1}})_{\widehat{j+1}}\simeq {\rm
Hom}_R(A_{\widehat{n-j-1}},A_{\widehat{n-1}})={\rm Hom}_R(M^{\ot_R\,
(n-j-1)},M^{\ot_R\, (n-1)}).$$ As $M^{\ot_R\, (n-j-1)}$ is an
invertible $R$-bimodule, the functor $M^{\ot_R\,
(n-j-1)}\ot_R(-):R-{\rm mod}\ra R-{\rm mod}$ is a $K$-linear
equivalence of categories, so the spaces ${\rm Hom}_R(M^{\ot_R\,
(n-j-1)},M^{\ot_R\, (n-1)})$ and ${\rm Hom}_R(R,M^{\ot_R\, j})$ are
isomorphic. Then ${\rm dim}({\rm Hom}_R(M^{\ot_R\,
(n-j-1)},M^{\ot_R\, (n-1)}))={\rm dim}(M^{\ot_R\, j})={\rm
dim}(A_{\hat{j}})$, showing that $F$ is indeed an isomorphism.

Conversely, assume that $A$ is quasi-Frobenius. Since each
$M^{\ot_R\, j}$ with $0\leq j\leq n-1$ is a projective right
$R$-module, we see that $A$ is also projective as a right
$R$-module, and then the functor $A\ot_R (-):R-{\rm mod}\ra A-{\rm
mod}$ is exact, and then its right adjoint, the restriction of
scalars from $A-{\rm mod}$ to $R-{\rm mod}$ takes injectives to
injectives. As $A$ is $A$-injective, it is also $R$-injective,
and then so is its direct summand (as a left $R$-module) $A_{\hat{0}}=R$. Thus $R$ is quasi-Frobenius.  \\
(iii) We have that $A_{\hat{i}}A_{\widehat{n-i}}={\rm
Im}(\varphi)=R=A_{\hat{0}}$ for any $1\leq i\leq n-1$, and obviously
$A_{\hat{0}}A_{\hat{0}}=A_{\hat{0}}$, thus $A$ is strongly graded. \\
(iv) Since $\varphi$ is an isomorphism, then $A$ is strongly graded,
thus $\hat{i}$-faithful for any $\hat{i}$. Since
$A_{\hat{i}}=M^{\ot_R\, i}\simeq R^*$ as left $R$-modules, then $A$
is $\hat{i}$-graded Frobenius, again by \cite[Theorem 4.3]{dnn1}.
\qed

\section{Associative bimodule morphisms} \label{sectionassociative}

If $R$ is an algebra and $\alpha,\beta$ are algebra automorphisms of
$R$, then there is a bimodule isomorphism $F:{_1R_\alpha}\ot
{_1R_\beta}\ra {_1R_{\alpha\beta}}$ given by
$F(r\ot_Rs)=r\alpha(s)$; its inverse is $F^{-1}(r)=r\ot_R1$.
Extending this, if $p\geq 2$ and $\alpha_1,\ldots,\alpha_p$ are
automorphisms of $R$, there is an isomorphism of bimodules
$$F:{_1R_{\alpha_1}}\ot_R{\;_1R_{\alpha_2}}\ot_R\ldots\ot_R{\;_1R_{\alpha_p}}\ra
{_1R_{\alpha_1\ldots\alpha_p}},$$ $$F(r_1\ot_Rr_2\ot_R\ldots
\ot_Rr_p)=r_1\alpha_1(r_2)\cdots
(\alpha_1\ldots\alpha_{p-1})(r_p),$$ whose inverse is given by
$F^{-1}(r)=r\ot_R1\ot_R\ldots \ot_R1$.

In the case where $R$ is a Frobenius algebra with a Frobenius form
$\lambda$ and associated Nakayama automorphism $\nu$, since there is
an $R$-bimodule isomorphism ${_1R_\nu}\ra R^*$ which takes $r\in R$
to $r\rhu \lambda$, we obtain a bimodule isomorphism
$$\theta:(R^*)^{\ot_R\, p}\ra {_1R_{\nu^p}}, \theta ((r_1\rhu
\lambda)\ot_R\ldots\ot_R(r_p\rhu
\lambda))=r_1\nu(r_2)\cdots\nu^{p-1}(r_p).$$

On the other hand, if $\alpha$ is an automorphism of $R$, then a
bimodule morphism $\gamma:{_1R_\alpha}\ra R$ is of the form
$\gamma(r)=rc$, where $c$ is an element of $R$ such that
$\alpha(r)c=cr$ for any $r\in R$. Combining with the considerations
above, if $R$ is Frobenius and $p\geq 2$, we obtain that a bimodule
morphism $\varphi:(R^*)^{\ot_R\, p}\ra R$ is of the form
$\varphi((r_1\rhu \lambda)\ot_R\ldots\ot_R(r_p\rhu
\lambda))=r_1\nu(r_2)\cdots\nu^{p-1}(r_p)c$, where $c\in R$ such
that $\nu^p(r)c=cr$ for any $r\in R$. In order to determine when is
such a $\varphi$ associative, we see that  $$\varphi((r_1\rhu
\lambda)\ot_R\ldots\ot_R(r_p\rhu \lambda))\rhu (r_{p+1}\rhu
\lambda)=(r_1\nu(r_2)\cdots\nu^{p-1}(r_p)cr_{p+1})\rhu \lambda,$$
and \bea (r_1\rhu \lambda)\lhu \varphi((r_2\rhu
\lambda)\ot_R\ldots\ot_R(r_{p+1}\rhu \lambda))&=&(r_1\rhu
\lambda)\lhu (r_2\nu(r_3)\cdots\nu^{p-1}(r_{p+1})c)\\
&=&(r_1\nu(r_2\nu(r_3)\cdots\nu^{p-1}(r_{p+1})c))\rhu \lambda\\
&=&(r_1\nu(r_2)\cdots \nu^{p-1}(r_p)\nu^p(r_{p+1})\nu(c))\rhu
\lambda,\eea so then $\varphi$ is associative if and only if
$scr=s\nu^p(r)\nu(c)$ for any $r,s\in R$. Since $cr=\nu^p(r)c$, it
is further equivalent to $s\nu^p(r)c=s\nu^p(r)\nu(c)$ for any $r,s$,
thus to $\nu(c)=c$.

We conclude that the associative bimodule morphisms $(R^*)^{\ot_R\,
p}\ra R$ are in bijection to the elements $c\in R$ satisfying
$\nu(c)=c$ and $\nu^p(r)c=cr$ for any $r\in R$. \\

\begin{example}
{\rm If $q\in K, q\neq 0$, let $K_q[X,Y]=K\{ X,Y\}/(YX-qXY)$ be the
quantum plane associated with $q$,  and let
$R_q=K_q[X,Y]/(X^2,Y^2)$. Then $R_q$  has  a basis $\{ 1,x,y,xy\}$,
where $x,y$ denote the classes of $X,Y$ in $R$, with relations
$x^2=y^2=0$ and $yx=qxy$. If $\{ 1^*,x^*,y^*,(xy)^*\}$ is the dual
basis of $R_q^*$, then $(xy)^*$ is a Frobenius form on $R_q$, thus
$R_q$ is a Frobenius algebra. The associated Nakayama automorphism
$\nu$ is given by $\nu(x)=q^{-1}x, \nu(y)=qy$.

Let $n\geq 2$. Associative morphisms of $R_q$-bimodules
$(R_q^*)^{\ot_{R_q}\, n}\ra R_q$ are in bijection to the elements
$c\in R_q$ such that $\nu(c)=c$, $q^{-n}xc=cx$ nd $q^nyc=cy$. If we
write $c=\alpha+\beta x+\gamma y+\delta xy$, with
$\alpha,\beta,\gamma,\delta\in K$, the first of the three relations
is equivalent to $\beta=\gamma=0$, while the other two are
equivalent to $q^n\alpha=\alpha$.

We conclude that if $q$ is not a $n$th root of unity, then $c=\delta
xy$, and any associative bimodule morphisms $(R_q^*)^{\ot_{R_q}\,
n}\ra R_q$ is a scalar multiple of the one obtained for $c=xy$,
while if $q^n=1$, then each $c=\alpha +\delta y$, with
$\alpha,\delta \in K$, determines an associtive bimodule morphism,
which is an isomorphism if and only if $\alpha\neq 0$.}
\end{example}

The rest of this section is devoted to proving Theorem B. Thus let
$R$ be a finite-dimensional algebra such that $(R^*)^{\ot_R\,
n}\simeq R$ as $R$-bimodules. In other words, $R^*$ is an invertible
$R$-bimodule and the order of its class in the Picard
group ${\rm Pic}(R)$ divides $n$. Since $R^*$ is invertible, we must have that $R$ is quasi-Frobenius.\\

{\it The case where $R$ is Frobenius.}

We first discuss the situation where $R$ is a Frobenius algebra. We
keep the notations $\lambda,\nu$ as above. Let
$\varphi_0:(R^*)^{\ot_R\, n}\ra R$ be a bimodule isomorphism. As we
discussed above, there is an invertible element $u\in R$ such that
$$\varphi_0((r_1\rhu
\lambda)\ot_R\ldots \ot_R(r_n\rhu \lambda))=r_1\nu(r_2)\cdots
\nu^{n-1}(r_n)u,$$ and $\nu^n(r)u=ur$, i.e., $\nu^n(r)=uru^{-1}$ for
any $r\in R$.

\begin{proposition} \label{lemanuu}
$\nu(u)=u$, thus $\varphi_0$ is associative.
\end{proposition}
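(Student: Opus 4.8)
We must show $\nu(u)=u$, where $u$ is the invertible element arising from the bimodule isomorphism $\varphi_0$. By the classification at the end of the previous discussion, an associative bimodule morphism $(R^*)^{\ot_R\, n}\ra R$ corresponds to $c\in R$ with $\nu(c)=c$ and $\nu^n(r)c=cr$ for all $r$; since $u$ already satisfies $\nu^n(r)u=ur$, proving $\nu(u)=u$ is exactly what is needed to conclude that $\varphi_0$ is associative. So the entire content is the identity $\nu(u)=u$.

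\medskip

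\textbf{Approach.} The idea is to use the uniqueness of $\varphi_0$ up to the ambiguity in the structure, by exploiting naturality of the constructions with respect to the Nakayama automorphism. First I would record the ``twisting'' behaviour: for any algebra automorphism $\alpha$ of $R$, precomposing the canonical isomorphism ${_1R_\nu}\simeq R^*$, $r\mapsto r\rhu\lambda$, with $\alpha$ produces the isomorphism $R^*\simeq {_{\alpha^{-1}}(R^*)_{?}}$ (equivalently, $\nu$ itself is ``transported'' by $\alpha$ to $\alpha\nu\alpha^{-1}$, which is the Nakayama automorphism of the Frobenius form $\lambda\circ\alpha$ up to bookkeeping). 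Apply this with $\alpha=\nu$: since $\nu$ is an algebra automorphism of $R$, applying $\nu$ componentwise to $(R^*)^{\ot_R\, n}$ and to $R$ gives another bimodule isomorphism $(R^*)^{\ot_R\, n}\ra R$, and chasing through the formula $\theta((r_1\rhu\lambda)\ot_R\cdots\ot_R(r_n\rhu\lambda))=r_1\nu(r_2)\cdots\nu^{n-1}(r_n)$ one sees that this twisted isomorphism corresponds, in the same parametrization, to the element $\nu(u)$. The key point is then that $\nu(u)$ and $u$ both parametrize bimodule isomorphisms $(R^*)^{\ot_R\, n}\ra R$, so $\nu(u)u^{-1}$ lies in the group of units $z$ with $rz=zr$ for all $r$ in the appropriate twisted sense; combined with the constraint $\nu^n(r)u=ur$ this should force $\nu(u)u^{-1}$ to be a central unit, and a further normalization argument pins it down to $1$.

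\medskip

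\textbf{Alternative (and probably cleaner) route.} Rather than twisting, argue directly with associativity of the $n$-fold tensor structure. Since $\varphi_0:(R^*)^{\ot_R\, n}\ra R$ is a bimodule isomorphism, the generalized associativity of tensor products gives two a priori different isomorphisms $(R^*)^{\ot_R\,(n+1)}\ra R^*$: apply $\varphi_0$ to the first $n$ factors and use the residual right $R^*$ (i.e.\ $\varphi_0\ot_R\mathrm{id}$ composed with the left action $R\ot_R R^*\simeq R^*$), versus apply $\varphi_0$ to the last $n$ factors (i.e.\ $\mathrm{id}\ot_R\varphi_0$ composed with $R^*\ot_R R\simeq R^*$). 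Compute each in terms of $\theta$ and $u$: the first gives $(r_1\rhu\lambda)\ot_R\cdots\ot_R(r_{n+1}\rhu\lambda)\mapsto (r_1\nu(r_2)\cdots\nu^{n-1}(r_n)u r_{n+1})\rhu\lambda$, while the second, using $(r\rhu\lambda)\lhu s = (r\nu(s))\rhu\lambda$, gives $\mapsto (r_1\nu(r_2)\cdots\nu^{n-1}(r_n)\nu^n(r_{n+1})\nu(u))\rhu\lambda = (r_1\nu(r_2)\cdots\nu^{n-1}(r_n)\, u\, r_{n+1}\nu(u)u^{-1}\cdot)\dots$ — more precisely, using $\nu^n(r_{n+1})\nu(u)=\nu(\nu^{n-1}(r_{n+1})u)$ is not quite the move; instead use $\nu^n(r_{n+1})u = u r_{n+1}$ directly to rewrite $\nu^n(r_{n+1})\nu(u) = \nu(\nu^{n-1}(r_{n+1}))\nu(u)$ and then compare. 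The comparison of the two formulas forces $ur_{n+1}=\nu^n(r_{n+1})\nu(u)$ for all $r_{n+1}$; feeding in $\nu^n(r_{n+1})u=ur_{n+1}$ yields $\nu^n(r_{n+1})u=\nu^n(r_{n+1})\nu(u)$ for all $r_{n+1}$, and since $\nu^n$ is surjective, $u=\nu(u)$.

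\medskip

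\textbf{Main obstacle.} The arithmetic of carefully tracking which associativity/coherence isomorphism is being used — in particular that $\varphi_0$ being an isomorphism lets us cancel it, and that applying $\varphi_0$ on the left $n$ factors versus the right $n$ factors of $(R^*)^{\ot_R\,(n+1)}$ must agree because both tensor-contractions land in $R^*$ and there is essentially one such contraction up to the bimodule structure (this is the analogue of Equation~(\ref{generalizedassoc}), but phrased for the isomorphism rather than an abstract associative $\varphi$). Once that coherence is set up correctly, the computation reducing to $u=\nu(u)$ via surjectivity of $\nu^n$ is routine. The second route above is the one I would write up.
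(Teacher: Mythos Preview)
Your second route---the one you say you would write up---is circular. The two maps you compare,
\[
(\varphi_0\ot_R\mathrm{id})\ \text{and}\ (\mathrm{id}\ot_R\varphi_0):\ (R^*)^{\ot_R\,(n+1)}\longrightarrow R^*,
\]
are both bimodule isomorphisms, but their equality is \emph{exactly} the associativity condition~(\ref{defassociative}) for $\varphi_0$, which is what you are trying to prove. Your computation is correct and shows precisely this: the two maps agree iff $ur=\nu^n(r)\nu(u)$ for all $r$, i.e.\ (using $\nu^n(r)u=ur$) iff $\nu(u)=u$. What is missing is any independent reason for the two maps to coincide. Your appeal to ``there is essentially one such contraction'' and to an analogue of~(\ref{generalizedassoc}) does not hold: equation~(\ref{generalizedassoc}) is \emph{derived from} associativity, and bimodule isomorphisms $(R^*)^{\ot_R\,(n+1)}\to R^*$ are parametrized by units $c$ with $\nu^n(s)c=cs$, which form a torsor under the central units of $R$---not a singleton. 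So there is no coherence principle to invoke.

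Your first route has the same problem in disguise: twisting $\varphi_0$ by $\nu$ produces another bimodule isomorphism whose parameter is $\nu(u)$, and you correctly note that $\nu(u)u^{-1}$ is then a central unit; but ``a further normalization argument pins it down to $1$'' is exactly the content of the proposition, and you give no such argument.

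The paper's proof supplies the missing input: the identity $\lambda\circ\nu=\lambda$, obtained by setting $x=1$ in $\lambda(rx)=\lambda(x\nu(r))$. With this, a direct chain of equalities starting from $\lambda(ru^{-1})=\lambda(u^{-1}\nu(r))$ and inserting $\nu(r)=\nu^n(\nu^{1-n}(r))=u\,\nu^{1-n}(r)\,u^{-1}$ yields $\lambda(ru^{-1})=\lambda(r\,\nu^{n-1}(u^{-1}))$ for all $r$, hence $\nu^{n-1}(u^{-1})=u^{-1}$; then $\nu(u^{-1})=\nu^n(u^{-1})=uu^{-1}u^{-1}=u^{-1}$. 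The crucial fact $\lambda\nu=\lambda$ is what breaks the apparent symmetry between $u$ and $\nu(u)$, and neither of your approaches uses it.
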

\begin{proof}
Since $\lambda\lhu r=\nu(r)\rhu \lambda$, we have that
$\lambda(rx)=\lambda(x\nu(r))$ for any $r,x\in R$. For $x=1$, this
shows that $\lambda(r)=\lambda(\nu(r))$, thus $\lambda=\lambda\nu$,
and then $\lambda=\lambda\nu^i$ for any $i\geq 1$. Now \bea
\lambda(ru^{-1})&=&\lambda (u^{-1}\nu(r))\\
&=&\lambda(u^{-1}\nu^n(\nu^{-n+1}(r)))\\
&=&\lambda(u^{-1}u\nu^{-n+1}(r)u^{-1})\\
&=&\lambda(\nu^{-n+1}(r)u^{-1})\\
&=&\lambda\nu^{n-1}(\nu^{-n+1}(r)u^{-1})\\
&=&\lambda(r\nu^{n-1}(u^{-1})), \eea thus $u^{-1}\rhu
\lambda=\nu^{n-1}(u^{-1})\rhu \lambda$, showing that
$\nu^{n-1}(u^{-1})=u^{-1}$. Then
$\nu(u^{-1})=\nu^n(u^{-1})=uu^{-1}u^{-1}=u^{-1}$, so also
$\nu(u)=u$.
\end{proof}

\begin{corollary} \label{asocFrobenius}
If $R$ is a Frobenius algebra and $(R^*)^{\ot_R\, n}\simeq R$ as
bimodules, then any morphism of bimodules $\varphi:(R^*)^{\ot_R\,
n}\ra R$  is associative.
\end{corollary}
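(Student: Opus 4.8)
The plan is to reduce the statement to Proposition \ref{lemanuu}. By hypothesis there is a bimodule isomorphism $\varphi_0:(R^*)^{\ot_R\, n}\ra R$, and, as recalled before Proposition \ref{lemanuu}, it has the form
$\varphi_0((r_1\rhu \lambda)\ot_R\ldots\ot_R(r_n\rhu \lambda))=r_1\nu(r_2)\cdots\nu^{n-1}(r_n)u$
for an invertible $u\in R$ satisfying $\nu^n(r)=uru^{-1}$ for every $r\in R$; by Proposition \ref{lemanuu} we have $\nu(u)=u$.

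Now let $\varphi:(R^*)^{\ot_R\, n}\ra R$ be an arbitrary morphism of bimodules. By the description of such morphisms given in this section, $\varphi$ corresponds to an element $c\in R$ with $\nu^n(r)c=cr$ for all $r\in R$, and $\varphi$ is associative precisely when $\nu(c)=c$; this is what I aim to prove. Substituting $\nu^n(r)=uru^{-1}$ into $\nu^n(r)c=cr$ gives $uru^{-1}c=cr$, hence $r(u^{-1}c)=(u^{-1}c)r$ for every $r\in R$. Thus $z:=u^{-1}c$ lies in the centre $Z(R)$, and $c=uz$.

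It remains only to observe that the Nakayama automorphism $\nu$ fixes $Z(R)$ pointwise. Indeed, for $z\in Z(R)$ and any $x\in R$, the defining relation $\lambda\lhu z=\nu(z)\rhu \lambda$, i.e. $\lambda(zx)=\lambda(x\nu(z))$, together with centrality of $z$, yields $\lambda(x\nu(z))=\lambda(zx)=\lambda(xz)$, so that $\lambda\big(x(z-\nu(z))\big)=0$ for all $x\in R$. Since $\lambda$ is a Frobenius form, the pairing $(a,b)\mapsto\lambda(ab)$ is nondegenerate, whence $\nu(z)=z$. Combining this with $\nu(u)=u$ we get $\nu(c)=\nu(u)\nu(z)=uz=c$, so $\varphi$ is associative. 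The only real content beyond bookkeeping is the remark that $u^{-1}c$ is central, which transforms the constraint on $c$ into the statement that $c$ differs from the ``unit'' $u$ by a central factor; the invariance of $Z(R)$ under $\nu$ is then immediate from nondegeneracy, and I do not anticipate any genuine obstacle, as the delicate point $\nu(u)=u$ is already secured by Proposition \ref{lemanuu}.
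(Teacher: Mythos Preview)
Your proof is correct and follows essentially the same strategy as the paper: both identify that an arbitrary $\varphi$ differs from the isomorphism $\varphi_0$ by a central factor (your $z=u^{-1}c$ is exactly the paper's central element $a$ with $\varphi=a\varphi_0$), and then use centrality to conclude. The only cosmetic difference is that you verify associativity via the criterion $\nu(c)=c$, proving along the way that $\nu$ fixes $Z(R)$ pointwise, whereas the paper checks the associativity identity $\varphi(m_1\ot_R\cdots\ot_Rm_n)m_{n+1}=m_1\varphi(m_2\ot_R\cdots\ot_Rm_{n+1})$ directly from $a\rhu r^*=r^*\lhu a$.
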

\begin{proof}
$\varphi \varphi_0^{-1}:R\ra R$ is a bimodule morphism, thus there
is a central element $a\in R$ such that $\varphi
\varphi_0^{-1}(r)=ar$ for any $r\in R$. Then
$\varphi(r)=a\varphi_0(r)$, and the associativity of $\varphi$
follows directly from the one of $\varphi_0$ and the fact that
$a\rhu r^*=r^*\lhu a$ for any $r^*\in R^*$.
\end{proof}

{\it The case where $R$ is quasi-Frobenius.}

Now we consider the general case, where $R$ is quasi-Frobenius. Let
$S$ be a basic algebra of $R$, which is Frobenius, see \cite[pages
172 and 351]{sy}. The algebras $S$ and $R$ are Morita equivalent,
and let
$$(S,R,_SP_R,_RQ_S,[\,,]:P\ot_RQ\ra S,(\, ,):Q\ot_SP\ra R)$$
be a strict Morita context connecting $S$ and $R$. As noticed in
\cite[pages 301-302]{takeuchi}, there is also an equivalence between
the categories of $S$-bimodules and $R$-bimodules, given by the
functor $F=Q\ot_S\cdot\ot_SP$.

If $M$ is an $S$-bimodule, let $\theta_M:F(M)^{\ot_Rn}\ra
F(M^{\ot_Sn})$ be the natural isomorphism, which works as
$$\theta_M(q_1\ot_Sm_1\ot_Sp_1\ot_R\ldots\ot_Rq_n\ot_Sm_n\ot_Sp_n)=q_1\ot_Sm_1[p_1,q_2]\ot_S\ldots\ot_Sm_{n-1}[p_{n-1},q_n]\ot_Sm_n\ot_S
p_n.$$ Also, let $\gamma:F(S)\ra R$ be the isomorphism of bimodules
defined by $\gamma(q\ot_Ss\ot_Sp)=(qs,p)$.

Now if $\psi:M^{\ot_Sn}\ra S$ is a morphism of $S$-bimodules, let
$\tilde{\psi}=\gamma F(\psi)\theta_M:F(M)^{\ot_Rn}\ra R$, which is a
morphism of $R$-bimodules. Explicitly,
$$\tilde{\psi}(q_1\ot_Sm_1\ot_Sp_1\ot_R\ldots\ot_Rq_n\ot_Sm_n\ot_Sp_n)=(q_1\psi(m_1[p_1,q_2]\ot_S\ldots\ot_Sm_{n-1}[p_{n-1},q_n]\ot_Sm_n),p_n).$$

\begin{proposition} \label{psitildeasociativ}
Let $M$ be an $S$-bimodule, and let $\psi:M^{\ot_Sn}\ra S$ be an
associative morphism of $S$-bimodules. Then
$\tilde{\psi}:F(M)^{\ot_Rn}\ra R$ is an associative morphism of
$R$-bimodules.
\end{proposition}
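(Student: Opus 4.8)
The plan is to verify the associativity identity for $\tilde{\psi}$ by a direct computation from its explicit formula. Conceptually, the reason it holds is that the monoidal equivalence $F=Q\ot_S\cdot\ot_SP$ carries the commutative square expressing the associativity of $\psi$ to the one expressing the associativity of $\tilde{\psi}$; but since $\tilde{\psi}$ has already been written out, it is just as quick to argue by hand. That $\tilde{\psi}$ is a morphism of $R$-bimodules is automatic, being a composite of bimodule morphisms, so only the relation \eqref{defassociative} for $\tilde{\psi}$ has to be checked; since that relation is multilinear in its arguments, it suffices to test it on simple tensors $x_i=q_i\ot_Sm_i\ot_Sp_i\in F(M)=Q\ot_SM\ot_SP$ for $1\leq i\leq n+1$.

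For the left-hand side, expanding $\tilde{\psi}(x_1\ot_R\ldots\ot_Rx_n)$ by the displayed formula yields the element $(q_1\psi(m_1[p_1,q_2]\ot_S\ldots\ot_Sm_{n-1}[p_{n-1},q_n]\ot_Sm_n),p_n)\in R$; multiplying it on the right by $x_{n+1}$ lets it act on the component $q_{n+1}$, and the Morita context identity $(q,p)q'=q[p,q']$ replaces $(q_1\psi(\cdots),p_n)q_{n+1}$ by $q_1\psi(\cdots)[p_n,q_{n+1}]$. Since $\psi$ is right $S$-linear on its last tensor factor, $\psi(\cdots)[p_n,q_{n+1}]=\psi(m_1[p_1,q_2]\ot_S\ldots\ot_Sm_n[p_n,q_{n+1}])$, and pushing this scalar of $S$ back across the first $\ot_S$ gives
\[
\tilde{\psi}(x_1\ot_R\ldots\ot_Rx_n)\,x_{n+1}=q_1\ot_S\psi\bigl(m_1[p_1,q_2]\ot_S\ldots\ot_Sm_n[p_n,q_{n+1}]\bigr)m_{n+1}\ot_Sp_{n+1}.
\]

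For the right-hand side, the same formula gives $\tilde{\psi}(x_2\ot_R\ldots\ot_Rx_{n+1})=(q_2\psi(m_2[p_2,q_3]\ot_S\ldots\ot_Sm_n[p_n,q_{n+1}]\ot_Sm_{n+1}),p_{n+1})\in R$, and multiplying $x_1$ on the right by it lets it act on $p_1$; the Morita identity $p(q,p')=[p,q]p'$ replaces $p_1(q_2\psi(\cdots),p_{n+1})$ by $[p_1,q_2]\psi(\cdots)p_{n+1}$, and moving the scalar $[p_1,q_2]\in S$ across the $\ot_S$ gives
\[
x_1\,\tilde{\psi}(x_2\ot_R\ldots\ot_Rx_{n+1})=q_1\ot_Sm_1[p_1,q_2]\ot_S\psi\bigl(m_2[p_2,q_3]\ot_S\ldots\ot_Sm_n[p_n,q_{n+1}]\ot_Sm_{n+1}\bigr)p_{n+1}.
\]
To finish, I apply the associativity \eqref{defassociative} of $\psi$ to the elements $a_1=m_1[p_1,q_2]$, $a_2=m_2[p_2,q_3]$, $\ldots$, $a_n=m_n[p_n,q_{n+1}]$, $a_{n+1}=m_{n+1}$ of $M$, which reads $\psi(a_1\ot_S\ldots\ot_Sa_n)a_{n+1}=a_1\psi(a_2\ot_S\ldots\ot_Sa_{n+1})$; substituting this into the last displayed expression for the left-hand side and moving the resulting scalar of $S$ across the final $\ot_S$ produces exactly the displayed expression for the right-hand side. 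Hence $\tilde{\psi}$ is associative.

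The only point requiring care is keeping track of which of the $n$ inner tensor slots — or the outer $Q$- or $P$-slot — each element of $S$ created by a bracket $[\,,\,]$ or by $\psi$ currently occupies, and moving it correctly across the balanced tensor products; once the bookkeeping is organised this way the computation is routine. Note that only the single relation \eqref{defassociative} for $\psi$ is needed, not the generalized form \eqref{generalizedassoc}.
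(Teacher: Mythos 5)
Your proof is correct and follows essentially the same route as the paper's: both reduce to simple tensors $q_i\ot_Sm_i\ot_Sp_i$, use the Morita identities $(q,p)q'=q[p,q']$ and $p(q,p')=[p,q]p'$ to convert the outer $R$-actions into brackets, and then invoke the associativity of $\psi$ on the elements $m_i[p_i,q_{i+1}]$. The only (immaterial) difference is that you absorb $[p_n,q_{n+1}]$ into the $n$-th slot of $\psi$ before applying associativity, whereas the paper leaves it attached to $m_{n+1}$; these agree by $S$-balancedness of the tensor product.
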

\begin{proof}
Let $z_i=q_i\ot_Sm_i\ot_Sp_i$ for $1\leq i\leq n+1$. Then

\begin{flalign*}
&\tilde{\psi}(z_1\ot_R\ldots\ot_Rz_n)z_{n+1}=&\\
&=(q_1\psi(m_1[p_1,q_2]\ot_S\ldots\ot_Sm_{n-1}[p_{n-1},q_n]\ot_Sm_n),p_n)q_{n+1}\ot_Sm_{n+1}\ot_Sp_{n+1}&\\
&=
q_1[\psi(m_1[p_1,q_2]\ot_S\ldots\ot_Sm_{n-1}[p_{n-1},q_n]\ot_Sm_n)p_n,q_{n+1}]\ot_Sm_{n+1}\ot_Sp_{n+1}&\\
&=q_1\ot_S\psi(m_1[p_1,q_2]\ot_S\ldots\ot_Sm_{n-1}[p_{n-1},q_n]\ot_Sm_n)[p_n,q_{n+1}]m_{n+1}\ot_Sp_{n+1}&\\
&=
q_1\ot_Sm_1[p_1,q_2]\psi(m_2[p_2,q_3]\ot_S\ldots\ot_Sm_{n-1}[p_{n-1},q_n]\ot_Sm_n\ot_S[p_n,q_{n+1}]m_{n+1})\ot_Sp_{n+1}&\\
&=q_1\ot_Sm_1\ot_S[p_1,q_2]\psi(m_2[p_2,q_3]\ot_S\ldots\ot_Sm_{n-1}[p_{n-1},q_n]\ot_Sm_n\ot_S[p_n,q_{n+1}]m_{n+1})p_{n+1}&\\
&=q_1\ot_Sm_1\ot_Sp_1(q_2,\psi(m_2[p_2,q_3]\ot_S\ldots\ot_Sm_{n-1}[p_{n-1},q_n]\ot_Sm_n\ot_S[p_n,q_{n+1}]m_{n+1})p_{n+1})&\\
&=z_1\tilde{\psi}(z_2\ot_R\ldots\ot_Rz_{n+1}), &
\end{flalign*}
thus $\tilde{\psi}$ is associative.
\end{proof}

\begin{corollary}\label{transferasoc}
Let $M$ be an $S$-bimodule such that any morphism of $S$-bimodules
$\psi:M^{\ot_Sn}\ra S$ is associative. Then any morphism of
$R$-bimodules $\varphi:F(M)^{\ot_Rn}\ra R$ is associative.
\end{corollary}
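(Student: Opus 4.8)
The plan is to reduce the statement for $R$-bimodule morphisms $\varphi:F(M)^{\ot_R n}\ra R$ to the corresponding statement for $S$-bimodule morphisms $\psi:M^{\ot_S n}\ra S$, using the monoidal equivalence $F=Q\ot_S\cdot\ot_SP$ between $S$-bimodules and $R$-bimodules. The key point is that, since $F$ is an equivalence of categories and the isomorphisms $\theta_M:F(M)^{\ot_R n}\ra F(M^{\ot_S n})$ and $\gamma:F(S)\ra R$ are available, \emph{every} $R$-bimodule morphism $\varphi:F(M)^{\ot_R n}\ra R$ arises as $\tilde\psi$ for a (unique) $S$-bimodule morphism $\psi:M^{\ot_S n}\ra S$. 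Granting that, Proposition~\ref{psitildeasociativ} immediately finishes the argument: if every such $\psi$ is associative, then $\varphi=\tilde\psi$ is associative too.

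Concretely, I would argue as follows. Given $\varphi:F(M)^{\ot_R n}\ra R$, form the composite
$$\psi' := \gamma^{-1}\,\varphi\,\theta_M^{-1}\colon F(M^{\ot_S n})\ra F(S),$$
which is a morphism of $R$-bimodules between objects in the image of $F$. Since $F$ is an equivalence of categories, it is full and faithful, so there is a unique morphism of $S$-bimodules $\psi:M^{\ot_S n}\ra S$ with $F(\psi)=\psi'$, i.e. $F(\psi)=\gamma^{-1}\varphi\theta_M^{-1}$. Rearranging, $\varphi=\gamma\,F(\psi)\,\theta_M=\tilde\psi$, exactly the map defined before Proposition~\ref{psitildeasociativ}. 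By hypothesis $\psi$ is associative, so by Proposition~\ref{psitildeasociativ} $\tilde\psi=\varphi$ is an associative morphism of $R$-bimodules. This is the whole proof.

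The only things to be checked carefully are bookkeeping items rather than genuine obstacles: that $\theta_M$ is indeed an isomorphism of $R$-bimodules (stated in the text as ``the natural isomorphism''), that $\gamma$ is an isomorphism of $R$-bimodules (also stated), and that $F$ being a category equivalence is full and faithful on Hom-sets of bimodules — all of which are recorded or immediate from the strictness of the Morita context. The mild subtlety worth a sentence is that $F(\psi)$ must be interpreted as a morphism $F(M^{\ot_S n})\ra F(S)$ and that $\theta_M$, $\gamma$ intertwine things so that $\tilde\psi$ lands in $R$ as required; this is precisely the content already spelled out in the explicit formula for $\tilde\psi$. No new computation beyond Proposition~\ref{psitildeasociativ} is needed. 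I would then note that Corollary~\ref{asocFrobenius} applies to the basic algebra $S$ (which is Frobenius) with $M=S^*$, and that $F(S^*)\simeq R^*$ as $R$-bimodules under the monoidal equivalence, so Corollary~\ref{transferasoc} yields Theorem~B in full generality; but that last remark belongs to the deduction of Theorem~B, not to the proof of the corollary itself.
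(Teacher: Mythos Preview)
Your proof is correct and matches the paper's own argument essentially line for line: form $\gamma^{-1}\varphi\theta_M^{-1}$, use fullness of $F$ to lift it to some $\psi:M^{\ot_S n}\ra S$, note that $\varphi=\tilde\psi$, and apply Proposition~\ref{psitildeasociativ}. The extra remarks you make about faithfulness (hence uniqueness of $\psi$) and about the subsequent deduction of Theorem~B are accurate but not needed for the corollary itself.
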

\begin{proof}
Let $\varphi:F(M)^{\ot_Rn}\ra R$ be a morphism of $R$-bimodules.
Since $F$ is an equivalence of categories, it is full, so there
exists a morphism of $S$-bimodules $\psi:M^{\ot_Sn}\ra S$ such that
$F(\psi)=\gamma^{-1}\varphi\theta_M^{-1}$. As $\psi$ is associative,
then $\tilde{\psi}$ is associative by Proposition
\ref{psitildeasociativ}. But $\tilde{\psi}=\gamma
F(\psi)\theta_M=\varphi$, and we are done.
\end{proof}

To finalize the proof of Theorem B, we recall from \cite[Proposition
3.2]{dnn2} that $F(S^*)\simeq R^*$ as $R$-bimodules. As $S$ is
Frobenius, we know by the Frobenius case, discussed earlier, that
any morphism of $S$-bimodules $\psi:(S^*)^{\ot_Sn}\ra S$ is
associative. Using Corollary \ref{transferasoc}, we conclude that
any morphism of $R$-bimodules $\varphi:(R^*)^{\ot_Rn}\ra R$ is
associative.

\section{Frobenius and symmetric properties of $A(R,R^*,\varphi)$}

We first consider the case where $A(R,R^*,\varphi)$ is a strongly
graded algebra. This is equivalent to $\varphi$ being an
isomorphism, in particular $R^*$ is an invertible $R$-bimodule, so
$R$ is quasi-Frobenius.\\

{\bf Proof of Theorem C.} Denote for simplicity
$A=A(R,R^*,\varphi)$.  Since $\varphi$ is an isomorphism, $A$ is
strongly graded, so it is $\hat{i}$-faithful for any $\hat{i}\in
\mathbb{Z}_n$.

(i) As $A$ is $\hat{1}$-faithful, and $A_{\hat{1}}=R^*\simeq
A_{\hat{0}}^*$ as left $R$-modules, we get that $A$ is
$\hat{1}$-graded Frobenius. Since an isomorphism
$\theta:A_{\hat{1}}\ra A_{\hat{0}}^*$ is just the identity map, we
can associate to $\theta$ a Frobenius form on $A$ defined by
$\Lambda:A\ra K$, $\Lambda (r^*)=r^*(1)$ for any $r^*\in
A_{\hat{1}}$, and $\Lambda (a)=0$ for any homogeneous $a\in A$ of
degree $\neq \hat{1}$. We have seen that $\Lambda$ is a Frobenius
form making $A$ into a $\hat{1}$-graded Frobenius algebra. We show
that $\Lambda (ab)=\Lambda (ba)$ for any homogeneous elements
$a,b\in A$, and this will show that $A$ is a symmetric algebra. It
is enough to check this for $a,b$ such that ${\rm deg}(ab)=\hat{1}$.

We first note that $\Lambda (r\rhu r^*)=(r\rhu
r^*)(1)=r^*(r)=(r^*\lhu r)(1)=\Lambda (r^*\lhu r)$ for any $r\in R$
and $r^*\in R^*$. This shows that if ${\rm deg}(a)=\hat{0}$ and
${\rm deg}(b)=\hat{1}$, then $\Lambda (ab)=\Lambda (ba)$.

If ${\rm deg}(a)=\hat{p}$ and ${\rm deg}(b)=\widehat{n-p+1}$, where
$2\leq p\leq n-1$, it is enough to check that $\Lambda (ab)=\Lambda
(ba)$ for $a=r_1^*\ot_R\ldots\ot_Rr_p^*$ and
$b=r_{p+1}^*\ot_R\ldots\ot_Rr_{n+1}^*$, where
$r_1^*,\ldots,r_{n+1}^*\in R^*$.

We see that

\begin{equation} \label{lambdaciclic}
\Lambda(r_1^*\cdot r_2^*\cdot\ldots\cdot
r_{n+1}^*)=\Lambda(r_{n+1}^*\cdot r_1^*\ldots\cdot r_n^*)
\end{equation}
Indeed, \bea \Lambda(r_1^*\cdot r_2^*\cdot\ldots\cdot
r_{n+1}^*)&=&\Lambda(\varphi(r_1^*\ot_R\ldots \ot_Rr_n^*)r_{n+1}^*)\\
&=& \Lambda(r_{n+1}^*\varphi(r_1^*\ot_R\ldots \ot_Rr_n^*))\\
&=&\Lambda(r_{n+1}^*\cdot r_1^*\cdot \ldots \cdot r_n^*).\eea

Then using $n+1-p$ times equation (\ref{lambdaciclic}), we get that
$$\Lambda(r_1^*\cdot r_2^*\cdot\ldots\cdot
r_{n+1}^*)=\Lambda(r_{p+1}^*\cdot \ldots\cdot r_{n+1}^*\cdot
r_1^*\cdot \ldots r_p^*),$$ showing that $\Lambda (ab)=\Lambda(ba)$.

(ii) $A$ is $\hat{i}$-faithful for any $0\leq i\leq n-1$, and since
$R$ is Frobenius, we have seen that $A_{\hat{i}}=(R^*)^{\ot_R\,
i}\simeq (_1R_\nu)^{\ot_R\, i}\simeq {_1R_{\nu^i}}$ as
$R$-bimodules, thus $A_{\hat{i}}\simeq R\simeq R^*$ as left
$R$-modules. These show that $A$ is $\hat{i}$-graded Frobenius. \qed
\\

{\bf Proof of Theorem D.} (i)

(i) If $n=2$, the condition $(R^*)^{\ot_R\,(n-2)}\simeq R$ is
automatically satisfied, and in this case we know that
$A(R,R^*,\varphi)$ is symmetric by \cite[Proposition 6.1]{dnn2}.

Let $n>2$ and assume that there is
 an isomorphism of
$R$-bimodules $\gamma:(R^*)^{\ot_R\,(n-2)}\ra R$; in this case $R$
must be quasi-Frobenius. By Theorem B,  $\gamma$ is associative.
Tensoring with the identity map of $R^*$, we obtain an isomorphism
of $R$-bimodules
$$\theta:A(R,R^*,\varphi)_{\widehat{n-1}}=(R^*)^{\ot_R\, (n-1)}\ra
R^*,\; \theta(r_1^*\ot_R\ldots\ot_Rr_{n-1}^*)=r_1^*\lhu
\gamma(r_2^*\ot_R\ldots\ot_Rr_{n-1}^*).$$ Since $A(R,R^*,\varphi)$
is $\widehat{n-1}$-faithful by Proposition A (note that $R^*$ is
invertible since $(R^*)^{\ot_R\,(n-2)}\simeq R$), we get that
$A(R,R^*,\varphi)$ is $\widehat{n-1}$-graded Frobenius,  a Frobenius
form $\Lambda:A(R,R^*,\varphi)\ra K$ is given by \bea
\Lambda(r_1^*\ot_R\ldots\ot_Rr_{n-1}^*)&=&(r_1^*\lhu
\gamma(r_2^*\ot_R\ldots\ot_Rr_{n-1}^*)(1)\\
&=&r_1^*(\gamma(r_2^*\ot_R\ldots\ot_Rr_{n-1}^*)),\eea and $\Lambda$
vanishes on any homogeneous component of $A(R,R^*,\varphi)$ of
degree $\neq \widehat{n-1}$. Now we see that \bea
\Lambda(r_1^*\ot_R\ldots\ot_Rr_{n-1}^*)&=&(r_1^*\lhu
\gamma(r_2^*\ot_R\ldots\ot_Rr_{n-1}^*)(1)\\
&=&(\gamma(r_1^*\ot_R\ldots\ot_Rr_{n-2}^*)\lhu r_{n-1}^*)(1) \\
&=&r_{n-1}^*(\gamma(r_1^*\ot_R\ldots\ot_Rr_{n-2}^*))\\
&=&\Lambda(r_{n-1}^*\ot_Rr_1^*\ot_R\ldots\ot_Rr_{n-2}^*).\eea Note
that the second equality holds since $\gamma$ is associative.
Applying the formula obtained above $i$ times, we get
$\Lambda(r_{n-i}^*\ot_R\ldots r_{n-1}^*\ot_Rr_1^*\ot_R\ldots
\ot_Rr_{n-i-1}^*)=\Lambda(r_1^*\ot_R\ldots\ot_Rr_{n-1}^*)$ for any
$1\leq i<n-1$, which implies that $\Lambda(ab)=\Lambda(ba)$ for any
homogeneous $a,b\in A(R,R^*,\varphi)$ such that $ab\in
A(R,R^*,\varphi)_{\widehat{n-1}}$. We conclude that
$A(R,R^*,\varphi)$ is symmetric.

(ii) Since $R$ is Frobenius, there is an isomorphism of bimodules
$$\theta_0:(R^*)^{\ot_R\, (n-1)}\ra {_1R_{\nu^{n-1}}}, \theta ((r_1\rhu
\lambda)\ot_R\ldots\ot_R(r_{n-1}\rhu
\lambda))=r_1\nu(r_2)\cdots\nu^{n-2}(r_{n-1}).$$ We regard this as
an isomorphism of left $R$-modules. Composing with the isomorphism
of left $R$-modules $R\simeq R^*$, $r\mapsto (r\rhu \lambda)$, we
obtain an isomorphism of left $R$-modules
$$\theta:(R^*)^{\ot_R\, (n-1)}\ra R, \theta ((r_1\rhu
\lambda)\ot_R\ldots\ot_R(r_{n-1}\rhu
\lambda))=(r_1\nu(r_2)\cdots\nu^{n-2}(r_{n-1}))\rhu \lambda.$$ Since
$A(R,R^*,\varphi)$ is $\widehat{n-1}$-faithful, we see that
$A(R,R^*,\varphi)$ is $\widehat{n-1}$-graded Frobenius, and a
Frobenius form $\Lambda:A(R,R^*,\varphi)\ra K$ is given by \bea
\Lambda(r_1^*\ot_R\ldots\ot_Rr_{n-1}^*)&=&\theta(r_1^*\ot_R\ldots\ot_Rr_{n-1}^*)(1)\\
&=&\lambda(r_1\nu(r_2)\cdots\nu^{n-2}(r_{n-1})),\eea and $\Lambda$
vanishes on homogeneous components of degree $\neq \widehat{n-1}$.

Now for any $1\leq i<n-1$, and any $r_1,\ldots,r_i,s_1,\ldots,
s_{n-1-i}\in R$, we have

\begin{flalign*}
&\Lambda((r_1\rhu \lambda)\ot_R\ldots\ot_R(r_i\rhu
\lambda)\ot_R(s_1\rhu \lambda)\ot_R\ldots\ot_R(s_{n-1-i}\rhu
\lambda))=&\\
&=\lambda(r_1\nu(r_2)\cdots
\nu^{i-1}(r_i)\nu^i(s_1)\cdots\nu^{n-2}(s_{n-1-i}))&\\
&=\lambda(\nu^i(s_1)\cdots\nu^{n-2}(s_{n-1-i})
\nu(r_1)\nu^2(r_2)\cdots \nu^{i}(r_i))  &\\
&= \lambda( s_1\cdots
\nu^{n-2-i}(s_{n-1-i})\nu^{1-i}(r_1)\nu^{2-i}(r_2)\cdots
\nu(r_{i-1})r_i)\;\;\;\;\;\; \mbox{(since }\lambda\nu^i=\lambda \mbox{)}&\\
&=\Lambda((s_1\rhu \lambda)\ot_R\ldots\ot_R(s_{n-1-i}\rhu
\lambda)\ot_R (\nu^{2-n}(r_1)\rhu \lambda)\ot_R\ldots\ot_R
(\nu^{2-n}(r_i)\rhu \lambda),&
\end{flalign*}
showing that the Nakayama automorphism $\mathcal{N}$ associated with
$\Lambda$ works as
$$\mathcal{N}((r_1\rhu \lambda)\ot_R\ldots\ot_R(r_{i}\rhu
\lambda))=(\nu^{2-n}(r_1)\rhu
\lambda)\ot_R\ldots\ot_R(\nu^{2-n}(r_{i})\rhu \lambda).$$

On the other hand, if $r\in R$, then for any $r_1,\ldots,r_{n-1}\in
R$

\bea \Lambda(r\cdot (r_1\rhu \lambda)\ot_R\ldots\ot_R(r_{n-1}\rhu
\lambda))&=& \Lambda((rr_1\rhu \lambda)\ot_R\ldots\ot_R(r_{n-1}\rhu
\lambda))\\
&=&\lambda(rr_1\nu(r_2)\cdots \nu^{n-2}(r_{n-1}))\\
&=& \lambda(r_1\nu(r_2)\cdots \nu^{n-2}(r_{n-1})\nu(r))\\
&=&  \lambda(r_1\nu(r_2)\cdots \nu^{n-2}(r_{n-1}\nu^{3-n}(r)))\\
&=& \Lambda((r_1\rhu
\lambda)\ot_R\ldots\ot_R(r_{n-1}\nu^{3-n}(r)\rhu \lambda))\\
&=& \Lambda((r_1\rhu \lambda)\ot_R\ldots\ot_R((r_{n-1}\rhu
\lambda)\lhu \nu^{2-n}(r)))\\
&=&\Lambda((r_1\rhu \lambda)\ot_R\ldots\ot_R(r_{n-1}\rhu
\lambda)\cdot \nu^{2-n}(r)),\eea showing that
$\mathcal{N}(r)=\nu^{2-n}(r)$.
 \qed

\begin{remark} \label{remarcaOre}
{\rm

Let $R$ be a Frobenius algebra with a Frobenius form $\lambda$ and
associated Nakayama automorphism $\nu$, $n\geq 2$, and
$$\varphi:(R^*)^{\ot_R\, n}\ra R, \varphi((r_1\rhu
\lambda)\ot_R\ldots\ot_R(r_n\rhu
\lambda))=r_1\nu(r_2)\cdots\nu^{n-1}(r_n)c$$ an associative bimodule
morphism, where $c\in R$ satisfies $\nu(c)=c$ and $\nu^n(r)c=cr$ for
any $r\in R$.

For any $1\leq j\leq n-1$ using the isomorphism of bimodules
$(R^*)^{\ot_R\, j}\simeq {_1R_{\nu^j}}$, any element of
$(R^*)^{\ot_R\, j}$ can be uniquely written as $(r\rhu
\lambda)\ot_R\lambda\ot_R\ldots\ot_R\lambda$ for some $r\in R$;
denote by $r^{[j]}$ this element of $(R^*)^{\ot_R\, j}$. We also
denote any $r\in R$ by $r^{[0]}$. We have
$$A(R,R^*,\varphi)=R^{[0]}\oplus R^{[1]}\oplus \ldots \oplus
R^{[n-1]},$$ and the multiplication of $A(R,R^*,\varphi)$, let it be
denoted by $*$ for avoiding confusion, can be rewritten as
$$r^{[j]}*s^{[i]}=\left\{
\begin{array}{ll}
(r\nu^j(s))^{[j+i]},&\mbox{ if } j+i<n,\\
(r\nu^j(s))^{[j+i-n]},&\mbox{ if } j+i\geq n
\end{array}
\right.$$ This shows that $A(R,R^*,\varphi)\simeq
\frac{R[X,\nu]}{(X^n-c)}$, where $R[X,\nu]$ is the Ore extension of
the algebra $R$, with respect to the automorphism $\nu$ of $R$, and
with zero derivation. An isomorphism takes $r^{[j]}$ to the class of
$rX^j$ in the factor algebra $\frac{R[X,\nu]}{(X^n-c)}$. }
\end{remark}

Now we give a criterion for an algebra of type $A(R,R^*,\varphi)$ to
be a symmetric algebra, in the case where $R$ is Frobenius.

\begin{proposition} \label{criteriu}
Let $R$ be a Frobenius algebra with a Frobenius form $\lambda$ and
associated Nakayama automorphism $\nu$, $n\geq 2$ an integer, and
$c\in R$ such that $\nu(c)=c$ and $\nu^n(r)c=cr$ for any $r\in R$.
Let $\varphi:(R^*)^{\ot_R\, n}\ra R$, $\varphi((r_1\rhu
\lambda)\ot_R\ldots\ot_R(r_n\rhu
\lambda))=r_1\nu(r_2)\cdots\nu^{n-1}(r_n)c$. Then $A(R,R^*,\varphi)$
is a symmetric algebra if and only if there exist
$r_0,\ldots,r_{n-1},s_0,\ldots,s_{n-1}\in R$ such that the following
three conditions hold.

$\rm{(I)}$ $r_js=\nu^{n-j-2}(s)r_j$ for any $0\leq j\leq n-1$ and
any $s\in R$;

$\rm{(II)}$ $\nu(r_j)=r_j$ for any $0\leq j<n-1$, and
$\nu(r_{n-1})c=r_{n-1}c$;

$\rm{(III)}$ The following equations are satisfied

\small{ $\begin{array}{lllllllllll}
r_0s_0&+&r_1\nu(s_{n-1})c&+&r_2\nu^2(s_{n-2})c&+&\ldots
&+&r_{n-1}\nu^{n-1}(s_1)c&=1\\
r_0s_1&+&r_1\nu(s_0)&+&r_2\nu^2(s_{n-1})c&+&\ldots
&+&r_{n-1}\nu^{n-1}(s_2)c&=0\\
r_0s_2&+&r_1\nu(s_1)&+&r_2\nu^2(s_0)&+&r_3\nu^3(s_{n-1})c+\ldots&
+&r_{n-1}\nu^{n-1}(s_3)c&=0\\
\ldots&\ldots&\ldots&\ldots&\ldots&\ldots&\ldots&\ldots&\ldots&\ldots\\
r_0s_{n-2}&+&r_1\nu(s_{n-3})&+&\ldots &\ldots&\ldots
+r_{n-2}\nu^{n-2}(s_0)&+&r_{n-1}\nu^{n-1}(s_{n-1})c&=0\\
r_0s_{n-1}&+&r_1\nu(s_{n-2})&+& \ldots&\ldots&\ldots
&+&r_{n-1}\nu^{n-1}(s_0)&=0
 \end{array}$}
\end{proposition}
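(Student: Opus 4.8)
The plan is to use the explicit description of $A(R,R^*,\varphi)$ from Remark~\ref{remarcaOre}, namely that it is isomorphic (via $r^{[j]}\mapsto$ class of $rX^j$) to $R[X,\nu]/(X^n-c)$, together with the characterization of symmetric algebras recalled in Section~2: $A(R,R^*,\varphi)$ is symmetric if and only if it admits a Frobenius form $\mu$ with $\mu(ab)=\mu(ba)$ for all $a,b$. By Theorem~D(ii) we already know $A(R,R^*,\varphi)$ is Frobenius with the explicit Frobenius form $\Lambda$ concentrated in degree $\widehat{n-1}$, with associated Nakayama automorphism $\mathcal{N}$. A standard fact (which I would invoke or quickly re-derive) is that a Frobenius algebra is symmetric precisely when its Nakayama automorphism is \emph{inner}; moreover, given a fixed Frobenius form $\Lambda$ with Nakayama automorphism $\mathcal{N}$, any other Frobenius form is of the shape $a\rhu\Lambda$ for a unit $a$, and such a form is symmetrizing iff $\mathcal{N}$ equals conjugation by $a$. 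So the real content is: $A(R,R^*,\varphi)$ is symmetric iff there is a unit $a=\sum_j r_j^{[j]}\in A(R,R^*,\varphi)$ (writing $a$ in terms of the $R^{[j]}$ decomposition) with $\mathcal{N}(x)=a\,x\,a^{-1}$ for all $x$, equivalently $\mathcal{N}(x)a = a x$ for all $x$, together with $a$ being invertible.

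The strategy is therefore to unpack the equation $\mathcal{N}(x)\,a = a\,x$ for $x$ ranging over the generators of $A(R,R^*,\varphi)$ — namely $x\in R=R^{[0]}$ and $x=X$, i.e. $x=1^{[1]}$ — and to unpack invertibility of $a$ by producing an explicit inverse $b=\sum_j s_j^{[j]}$, i.e. $a*b = 1$. First I would compute $\mathcal{N}(r^{[j]})$ from the formula in Theorem~D(ii): since $\mathcal{N}((r_1\rhu\lambda)\ot_R\cdots\ot_R(r_i\rhu\lambda))=(\nu^{2-n}(r_1)\rhu\lambda)\ot_R\cdots\ot_R(\nu^{2-n}(r_i)\rhu\lambda)$ and $\mathcal{N}(r)=\nu^{2-n}(r)$ on $R$, translating into the $r^{[j]}$ notation of Remark~\ref{remarcaOre} gives $\mathcal{N}(r^{[j]}) = (\nu^{2-n}(r))^{[j]}$ (one checks that the higher tensor slots $\lambda$ are fixed, using $\nu^{2-n}$ acting slotwise and the bimodule identification). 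Writing $a=\sum_{j=0}^{n-1} r_j^{[j]}$, the condition $\mathcal{N}(s^{[0]})*a = a*s^{[0]}$ for all $s\in R$, using the multiplication rule $r^{[j]}*s^{[i]}=(r\nu^j(s))^{[j+i \bmod n]}$, reduces componentwise to $\nu^{2-n}(s)\,r_j = r_j\,\nu^j(s)$ for every $j$ — which after relabeling is exactly condition $\rm{(I)}$ (note $r_j\nu^j(s)=\nu^{2-n}(s)r_j$ rewrites as $r_j\,s' = \nu^{n-j-2}(s')\,r_j$ with $s'=\nu^j(s)$). Next, the condition $\mathcal{N}(1^{[1]})*a = a*1^{[1]}$: since $\mathcal{N}(1^{[1]}) = 1^{[1]}$ (because $\nu^{2-n}(1)=1$), the left side is $\sum_j (\nu(r_j))^{[j+1]}$ with the $j=n-1$ term becoming $(\nu(r_{n-1})c)^{[0]}$ after applying $X^n=c$; the right side is $\sum_j r_j^{[j+1]}$ with the $j=n-1$ term $(r_{n-1}c)^{[0]}$. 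Matching components yields $\nu(r_j)=r_j$ for $0\le j<n-1$ and $\nu(r_{n-1})c = r_{n-1}c$, which is precisely condition $\rm{(II)}$.

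Finally, for invertibility: I would show $a$ is a unit in $A(R,R^*,\varphi)$ iff it has a right inverse $b=\sum_{i=0}^{n-1} s_i^{[i]}$ (a finite-dimensional algebra, so left/right invertibility of a single element coincide, but it is cleanest to just exhibit $b$ with $a*b=1$). Computing $a*b = \big(\sum_j r_j^{[j]}\big)*\big(\sum_i s_i^{[i]}\big) = \sum_{j,i} (r_j\nu^j(s_i))^{[j+i \bmod n]}$ and collecting the coefficient of each $R^{[k]}$, using $X^n = c$ to fold the wrapped-around terms (those with $j+i\ge n$ pick up a factor $c$ on the right after $\nu^j$ is applied to $s_i$), gives exactly the system of $n$ equations displayed in condition $\rm{(III)}$: the $k=0$ row equals $1$, the rest equal $0$. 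One should note that $\rm{(I)}$ and $\rm{(II)}$ are automatically the conditions needed for $b$ (with the \emph{same} data not required — $b$ is only constrained by $\rm{(III)}$), so the three conditions together are equivalent to: $a$ is a unit and $\mathcal{N} = $ conjugation by $a$, hence to $A(R,R^*,\varphi)$ being symmetric. The main obstacle, and the step needing the most care, is the bookkeeping in translating $\mathcal{N}$ from the $(r_1\rhu\lambda)\ot_R\cdots$ description to the $r^{[j]}$ description and confirming it acts as $\nu^{2-n}$ on the leading slot while fixing the trailing $\lambda$'s — and then correctly tracking where the factor $c$ appears when reducing modulo $X^n - c$ in both the conjugation equation (giving the asymmetry between $j<n-1$ and $j=n-1$ in $\rm{(II)}$) and the invertibility equations (giving the $c$'s scattered through $\rm{(III)}$). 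Everything else is a routine, if lengthy, expansion using the multiplication rule of Remark~\ref{remarcaOre}.
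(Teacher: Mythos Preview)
Your approach is essentially the paper's (symmetric $\Leftrightarrow$ $\mathcal{N}$ inner, then unpack in the $r^{[j]}$ coordinates of Remark~\ref{remarcaOre}), but the conjugation is taken in the wrong direction, and an arithmetic slip hides this. If $\mu=a\rhu\Lambda$, then its Nakayama automorphism is $r\mapsto a\,\mathcal{N}(r)\,a^{-1}$, so $\mu$ is symmetric iff $\mathcal{N}(x)=a^{-1}xa$, i.e.\ $a\,\mathcal{N}(x)=x\,a$; you wrote $\mathcal{N}(x)=a\,x\,a^{-1}$, i.e.\ $\mathcal{N}(x)\,a=a\,x$. With your equation, testing at $x=s^{[0]}$ gives $\nu^{2-n}(s)\,r_j=r_j\,\nu^j(s)$, and substituting $s'=\nu^j(s)$ yields $r_j s'=\nu^{\,2-n-j}(s')\,r_j$, \emph{not} $\nu^{\,n-j-2}(s')\,r_j$ as you claim (the exponent $2-n-j$ differs from $n-j-2$ for $n\ge 3$). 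So with your direction one does not obtain condition~(I) of the proposition.

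The paper uses the correct direction $U\mathcal{N}(a)=aU$: then $r_j^{[j]}*(\nu^{2-n}(s))^{[0]}=(r_j\,\nu^{\,j+2-n}(s))^{[j]}$ must equal $s^{[0]}*r_j^{[j]}=(s\,r_j)^{[j]}$, giving $r_j\,\nu^{\,j+2-n}(s)=s\,r_j$, which after the substitution $s'=\nu^{\,j+2-n}(s)$ is exactly $r_j s'=\nu^{\,n-j-2}(s')\,r_j$, condition~(I). Your derivation of~(II) happens to be unaffected because $\mathcal{N}(1^{[1]})=1^{[1]}$ makes that equation a plain commutation $1^{[1]}*a=a*1^{[1]}$, and your computation of~(III) from $a*b=1$ is correct. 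So the only repair needed is to swap to $a\,\mathcal{N}(x)=x\,a$; after that, your argument coincides with the paper's.
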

\begin{proof}
Denote $A(R,R^*,\varphi)$ by $A$. It is
 a Frobenius algebra, with a Frobenius form $\Lambda$, and associated Nakayama
automorphism $\mathcal{N}$. Then $A$ is symmetric if and only if
$\mathcal{N}$ is an inner automorphism, i.e., there exists an
invertible $U\in A$ such that $\mathcal{N}(a)=U^{-1}aU$, or
equivalently, $U\mathcal{N}(a)=aU$, for any $a\in A$. Since
$\mathcal{N}$ preserves the degree, this is further equivalent to
the fact that each homogeneous component $U_{\hat{j}}\in
A_{\hat{j}}$ of $U$ satisfies
$U_{\hat{j}}\mathcal{N}(a)=aU_{\hat{j}}$, for any $a\in A$. As in
Remark \ref{remarcaOre}, for any $1\leq j\leq n-1$  we can write
$U_{\hat{j}}=(r_j\rhu \lambda)\ot_R\lambda\ot_R\ldots\ot_R\lambda$
for some $r_j\in R$. We also denote $U_{\hat{0}}=r_0$. We seek $r_j$
such that $U_{\hat{j}}\mathcal{N}(a)=aU_{\hat{j}}$ for any
homogeneous $a\in A$. For $a$ of degree $\hat{i}$, we can similarly
write $a=(s\rhu \lambda)\ot_R\lambda\ot_R\ldots\ot_R\lambda$ ($i$
tensor factors) for some $s\in R$, and then
$U_{\hat{j}}\mathcal{N}(a)=aU_{\hat{j}}$ is equivalent to
\begin{equation} \label{comutare1}
r_j\nu^{j-n+2}(s)=s\nu^i(r_j) \mbox{  if }i+j<n,
\end{equation}
and to
\begin{equation} \label{comutare2}
r_j\nu^{j-n+2}(s)c=s\nu^i(r_j)c \mbox{  if } i+j\geq n.
\end{equation}

For $j=0$, one can similarly see that
$U_{\hat{0}}\mathcal{N}(a)=aU_{\hat{0}}$ is equivalent to equation
(\ref{comutare1}) written for $j=0$.

For $0\leq j<n-1$, if we take $s=1$ and $i=1$ in equation
(\ref{comutare1}), we get $\nu(r_j)=r_j$, while for $j=n-1$, if we
take $s=1$ and $i=1$ in equation (\ref{comutare2}), we get
$\nu(r_{n-1})c=r_{n-1}c$. We have obtained that $U\mathcal{N}(a)=aU$
for any $a\in A$ if and only if $r_0,\ldots,r_{n-1}$ satisfy
conditions (I) and (II).

Finally, we show that the invertibility of $U$ is equivalent to
condition (III). Indeed, $U$ is invertible if and only if there
exists
$$V=s_0+(s_1\rhu\lambda)+(s_2\rhu\lambda)\ot_R\lambda +\ldots
+(s_{n-1}\rhu \lambda)\ot_R\lambda\ot_R\ldots\ot_R\lambda \in A$$
such that $UV=1$. Equating the homogeneous components of degree
$\hat{0},\hat{1},\ldots,\widehat{n-1}$ with $1,0,\ldots,0$, we
obtain exactly the equations in (III), and this ends the proof.
\end{proof}

\begin{remark}
{\rm  We can explicitly give the solution of the system in condition
(III) of the previous proposition in two cases where we already know
that $A(R,R^*,\varphi)$ is symmetric.

(i) If $\varphi$ is an isomorphism, then $\varphi$ is given by some
invertible element $c$ in $R$ satisfying $\nu^n(r)c=cr$ for any
$r\in R$, and $\nu(c)=c$. Take $r_{n-2}=1$, which satisfies the
conditions $r_{n-2}s=sr_{n-2}$ for any $s\in R$, and
$\nu(r_{n-2})=r_{n-2}$, required in (I) and (II), $s_2=c^{-1}$, and
$r_i=0$ for any $i\neq n-2$, and $s_i=0$ for any $i\neq 2$. Then
$r_0,\ldots,r_{n-1},s_0,\ldots,s_{n-1}$ satisfy (I), (II) and (III).

(ii) If $(R^*)^{\ot_R\, (n-2)}\simeq R$ as $R$-bimodules, then
$\nu^{n-2}$ is an inner automorphism. Let $u\in R$ be an invertible
element such that $\nu^{n-2}(r)=u^{-1}ru$ for any $r\in R$. By Lemma
\ref{lemanuu}, $\nu(u)=u$. Take $r_0=u^{-1}$, $s_0=u$, and
$r_i=s_i=0$ for any $i\neq 0$. Then
$r_0,\ldots,r_{n-1},s_0,\ldots,s_{n-1}$ satisfy (I), (II) and (III).
}
\end{remark}

Now we use Theorem D and the criterion in Proposition \ref{criteriu}
for proving Corollary E.\\

{\bf Proof of Corollary E.} (i) If $(R^*)^{\ot_R\, (n-2)}\simeq R$
or $\varphi$ is an isomorphism, then $A(R,R^*,\varphi)$ is symmetric
by Theorem D (i), respectively by Theorem C (i); note that the
condition that $R$ is local is not necessary.

Conversely assume that $A(R,R^*,\varphi)$ is symmetric. By
Proposition \ref{criteriu} there exist $r_0,\ldots,r_{n-1}$,
$s_0,\ldots,s_{n-1}\in R$ satisfying conditions (I), (II) and (III),
where $c$ is the element defining $\varphi$, as in the statement of
the proposition. Since $R$ is local, the first equation in (III)
shows that either $r_0s_0$is invertible, or one of
$r_1\nu(s_{n-1})c, r_2\nu^2(s_{n-2})c,\ldots,r_{n-1}\nu^{n-1}(s_1)c$
is invertible. In the second case, we get that $c$ is invertible,
and then $\varphi$ is an isomorphism. If $r_0s_0$ is invertible,
then so is $r_0$, and condition (I) for $j=0$ gives
$\nu^{n-2}(s)=r_0sr_0^{-1}$ for any $s\in R$, which shows that
$\nu^{n-2}$ is inner, or equivalently, $(R^*)^{\ot_R\, (n-2)}\simeq
R$ as $R$-bimodules.

(ii) Again, the if implication follows from Theorem D. Conversely,
assume that $A(R,n)$ is symmetric. As $\varphi=0$, the element $c$
defining $\varphi$ is also 0, and then the first equation in (III)
is just $r_0s_0=1$, showing that $r_0$ is invertible. This implies
that $\nu^{n-2}$ is inner, thus $(R^*)^{\ot_R\, (n-2)}\simeq R$.
\qed

\begin{remark}
{\rm  The assumption that $R$ is local can not be omitted in
Corollary E (i). Indeed, let $n\geq 3$, and let $S$ and $T$ be
Frobenius algebras such that the classes of $S^*$ and $T^*$ have
orders $n$, respectively $n-2$ in the Picard groups of $S$,
respectively $T$. Let $\rho,\omega$ be Frobenius forms in $S,T$, and
$\mu,\delta$ the associated Nakayama automorphisms.

Consider the algebra $R=S\times T$, which is Frobenius. A Frobenius
form on $R$ is $\lambda:R\ra K$, $\lambda (s,t)=\rho(s)+\omega(t)$
for any $s\in S,t\in T$, and the Nakayama automorphism of $R$
associated with $\lambda$ is $\nu(s,t)=(\mu(s),\delta(t))$. Clearly,
$\nu^m$ is inner for some $m$ if and only if both $\mu^m$ and
$\delta^m$ are inner. In particular, $\nu^{n-2}$ is not inner, since
neither is $\mu^{n-2}$.

By the discussion at the beginning of Section
\ref{sectionassociative}, an associative morphism of $R$-bimodules
$\varphi:(R^*)^{\ot_R\, n}\ra R$ is given by a pair $(c,d)\in
S\times T$ such that $\nu(c,d)=(c,d)$ and $(\nu^n(s,t))\cdot
(c,d)=(c,d)\cdot (s,t)$ for any $s\in S,t\in T$, which is the same
with $\mu^n(s)c=cs$ for any $s\in S$, $\mu(c)=c$, $\delta^n(t)d=dt$
for any $t\in T$, and $\delta(d)=d$. Choose such a pair $(c,d)$ such
that $c$ is an invertible element of $S$ defining an isomorphism of
$S$-bimodules $\psi:(S^*)^{\ot_S\, n}\ra S$ (which exists by the
assumption on $S$), and $d=0$, which obviously defines the zero
morphism of $T$-bimodules $\xi:(T^*)^{\ot_T\, n}\ra T$.

We apply the criterion in Proposition \ref{criteriu} in order to
check whether $A(R,R^*,\varphi)$ is symmetric. The system of
equations from (III) in the algebra $R$, written for the element
$(c,d)\in R$, splits into the similar system of equations in $S$ for
the element $c$, and the one in $R$ for the element $d=0$, while
conditions (I) and (II) also split into the two cases. Applying the
criterion, the first system in $S$ has solution, since
$A(S,S^*,\psi)$ is symmetric by Theorem C, and so does the second
one in $T$, since $A(T,T^*,\xi)=A(T,n)$ is symmetric by Theorem D
(i). Putting together the solutions of these two systems, we get
that the initial system in $R$ has solution, so $A(R,R^*,\varphi)$
is symmetric.

On the other hand, neither $\varphi$ is an isomorphism, since
$(c,d)$ is not invertible, nor $(T^*)^{\ot_T\, (n-2)}\simeq T$ as
$T$-bimodules, since $\nu^{n-2}$ is not inner. }
\end{remark}

Before starting the proof of Theorem F, let $R$ be quasi-Frobenius,
and let $e_1,\ldots,e_q$ be primitive idempotents of $R$ such that
$Re_1,\ldots,Re_q$ is a system of representatives for the
isomorphism types of principal indecomposable $R$-modules, with
multiplicities $m_1,\ldots,m_q$ in $R$; thus $R\simeq
(Re_1)^{m_1}\oplus \ldots \oplus (Re_q)^{m_q}$ as left $R$-modules.
Denote by ${\rm top}(Re_1)=S_1,\ldots,{\rm top}(Re_q)=S_q$ the
(distinct) isomorphism types of simple $R$-modules. The associated
Nakayama permutation $\pi$ of the set $\{ 1,\ldots,q\}$ is defined
such that ${\rm top}(Re_i)={\rm soc}(Re_{\pi(i)})$ for any $1\leq
i\leq q$, see \cite[Theorem IV.6.1]{sy}. One knows that $R$ is
Frobenius if and only if $m_i=m_{\pi(i)}$ for any $1\leq i\leq q$,
see \cite[Theorem IV.6.2]{sy}.

\begin{lemma} \label{lemaR*tensorsimplu}
Keeping the notation above, let $e$ be a primitive idempotent of
$R$. Then ${\rm top}(Re)\simeq R^*\ot_R{\rm soc}(Re)$. In
particular, $R^*\ot S_j\simeq S_{\pi(j)}$ for any $1\leq j\leq q$.
\end{lemma}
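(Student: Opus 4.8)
The plan is to identify $R^* \ot_R \mathrm{soc}(Re)$ directly, using the fact that for a primitive idempotent $e$ the module $\mathrm{soc}(Re)$ is simple, together with the description of $R^*$ as an invertible $R$-bimodule (recall $R$ is quasi-Frobenius, so $R^*$ is invertible). First I would reduce to understanding the left $R$-module structure of $R^* \ot_R \mathrm{soc}(Re)$. Since $\mathrm{soc}(Re)$ is a simple left $R$-module, say $\mathrm{soc}(Re) \simeq S_k$ for a unique $k$, and since $R^* \ot_R (-)$ is an exact equivalence of categories $R\text{-mod}\to R\text{-mod}$ (as $R^*$ is invertible), the module $R^* \ot_R S_k$ is again simple, hence isomorphic to some $S_\ell$; the content of the lemma is that $\ell$ is determined by $e$ via the relation $S_\ell \simeq \mathrm{top}(Re)$, i.e. $\ell = \pi(k)$ once we know $\mathrm{soc}(Re) \simeq S_k$.

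The key step is to compute $R^* \ot_R Re$ as a left $R$-module. I would use the standard isomorphism $R^* \ot_R Re \simeq (R^*)e \simeq (eR)^*$ of left $R$-modules (the first via the natural map $\xi \ot re \mapsto \xi \lhu (re)$ restricted appropriately, the second being the usual duality between $(R^*)e$ and $eR$). Then $(eR)^*$ is the linear dual of the indecomposable \emph{projective right} $R$-module $eR$, whose top is the simple right module $\mathrm{top}(eR)$. Dualizing, $\mathrm{soc}\big((eR)^*\big) \simeq \big(\mathrm{top}(eR)\big)^*$ as left $R$-modules. Applying $R^* \ot_R (-)$ to the inclusion $\mathrm{soc}(Re) \hookrightarrow Re$ and using exactness of the functor, $R^* \ot_R \mathrm{soc}(Re)$ embeds in $R^* \ot_R Re \simeq (eR)^*$ as a simple submodule, hence $R^* \ot_R \mathrm{soc}(Re) \simeq \mathrm{soc}\big((eR)^*\big) \simeq \big(\mathrm{top}(eR)\big)^*$. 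It then remains to identify $\big(\mathrm{top}(eR)\big)^*$ with $\mathrm{top}(Re)$: both are the simple module "attached to $e$", since for any idempotent $e$ one has $\mathrm{top}(Re) = Re/J(R)e \simeq (eR/eJ(R))^* = \big(\mathrm{top}(eR)\big)^*$ via the perfect pairing between $Re/J(R)e$ and $eR/eJ(R)$ induced by multiplication and evaluation against a fixed nondegenerate form — or more simply, because both simple modules have the same projective cover label. This gives the first assertion.

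For the final assertion, apply the first part with $e = e_{\pi(j)}$: by definition of the Nakayama permutation, $\mathrm{soc}(Re_{\pi(j)}) \simeq \mathrm{top}(Re_j) = S_j$, and also $\mathrm{top}(Re_{\pi(j)}) = S_{\pi(j)}$, so the displayed isomorphism reads $S_{\pi(j)} \simeq R^* \ot_R S_j$, as claimed. The main obstacle I anticipate is being careful and consistent with the left/right conventions: one must track whether $R^*$ carries the bimodule structure via $\rhu$ and $\lhu$ as fixed in the paper, verify that the isomorphism $R^*\ot_R Re \simeq (eR)^*$ is genuinely one of \emph{left} $R$-modules with that convention, and make sure the socle/top duality $\mathrm{soc}(N^*) \simeq (\mathrm{top}\,N)^*$ is applied to the correct side (right module $eR$, dual a left module). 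Once the conventions are pinned down, each individual isomorphism is routine.
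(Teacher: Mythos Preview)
Your argument is correct and follows essentially the same route as the paper's proof: both use the chain $R^*\ot_R Re \simeq R^*e \simeq (eR)^*$ and then identify $R^*\ot_R{\rm soc}(Re)$ with ${\rm soc}((eR)^*)$ via the fact that $R^*\ot_R(-)$ is an equivalence. The only cosmetic difference is that the paper invokes the identification ${\rm top}(Re)\simeq{\rm soc}((eR)^*)$ directly from \cite[Lemma I.8.22]{sy}, whereas you factor it through $(\mathrm{top}(eR))^*$ and then argue $(\mathrm{top}(eR))^*\simeq\mathrm{top}(Re)$; this extra step is fine but unnecessary once the cited lemma is available.
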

\begin{proof}
We have ${\rm top}(Re)\simeq {\rm soc}((eR)^*)$ by \cite[Lemma
I.8.22]{sy}. Since $(eR)^*\simeq R^*e\simeq R^*\ot_RRe$ as left
$R$-modules, we see that ${\rm soc}((eR)^*)\simeq R^*\ot_R{\rm
soc}(Re)$, showing that ${\rm top}(Re)\simeq R^*\ot_R{\rm soc}(Re)$.

For $e=e_i$, this means that $S_i\simeq R^*\ot_R S_{\pi^{-1}(i)}$.
Thus $R^*\ot S_j\simeq S_{\pi(j)}$ for any $j$.
\end{proof}

{\bf Proof of Theorem F.} Denote $A=A(R,n)$ and
$I=A_{\hat{1}}+\ldots +A_{\widehat{n-1}}$, which is a nilpotent
ideal of $A$. Then $J(A)=J(R)+I$. Since $A/I\simeq R$, any
$R$-module $M$ has also an $A$-module structure such that $IM=0$ and
the action of $A_{\hat{0}}$ on $M$ is given by the initial
$R$-module structure. Moreover, two $R$-modules $M$ and $N$ are
isomorphic if and only if $M\simeq N$ as $A$-modules. We will freely
regard an $R$-module as either an $R$-module or as an $A$-module;
note that the submodules are the same in the two situations.

We see that $A_{\widehat{n-1}}$ is a graded $A$-submodule of $A$,
and it is graded essential in $A$. Indeed, by Proposition A (i) for
$M=R^*$, $A$ is $\widehat{n-1}$-faithful, so for any $0\neq z\in
A_{\hat{i}}$, where $1\leq i\leq n-1$, we have
$A_{\widehat{n-1-i}}z\neq 0$, thus there is $a\in
A_{\widehat{n-1-i}}$ such that $0\neq az\in A_{\widehat{n-1}}$. By
\cite[Proposition 2.3.5]{nvo}, $A_{\widehat{n-1}}$ is an essential
$A$-submodule of $A$. We note that this implies that the graded
socle of $A$ (i.e. the socle of the graded left $A$-module $A$)
coincides with the socle of $A$, and it lies inside
$A_{\widehat{n-1}}$, more precisely, it is just the socle of the
$R$-module $A_{\widehat{n-1}}$.

If $e$ is a primitive idempotent of $R$, then it is also primitive
in $A$. Indeed, if $e=a+b$ for some idempotents $a,b$, then
$a_{\hat{0}}$ and $b_{\hat{0}}$ are idempotents of $R$ and
$e=a_{\hat{0}}+b_{\hat{0}}$, so one of $a_{\hat{0}}$ and
$b_{\hat{0}}$ must be $0$. If $a_{\hat{0}}=0$, then equating the
homogeneous components of degrees $\hat{1},\ldots,\widehat{n-1}$ in
$a^2=a$, we successively get
$a_{\hat{1}}=0,\ldots,a_{\widehat{n-1}}=0$, thus $a=0$. Therefore,
the isomorphism types of principal indecomposable $A$-modules are
$Ae_1,\ldots,Ae_q$, and their multiplicities in $A$ are
$m_1,\ldots,m_q$.

As above, denote ${\rm top}(Re_1)=S_1,\ldots,{\rm top}(Re_q)=S_q$.

If $e$ is a primitive idempotent of $R$, consider ${\rm top}(Ae)$
and ${\rm soc}(Ae)$ of the left $A$-module $Ae$. Then ${\rm
top}(Ae)=\frac{Ae}{J(A)e}$ is annihilated by $I$, so it can be
regarded as a left $R$-module, and ${\rm top}(Ae)\simeq
\frac{Re}{J(R)e}={\rm top}(_ARe)={\rm top}(_RRe)$ as left
$R$-modules. On the other hand,
$(Ae)_{\widehat{n-1}}=A_{\widehat{n-1}}\cap Ae$ is an essential
submodule of $Ae$, so ${\rm soc}(Ae)={\rm
soc}((Ae)_{\widehat{n-1}})$. Clearly, $(Ae)_{\widehat{n-1}}$ is
annihilated by $I$, and then so is ${\rm soc}(Ae)$, thus we can
regard ${\rm soc}(Ae)$ as a left module over $A/I\simeq R$.
Moreover, the socle of $(Ae)_{\widehat{n-1}}$
 as an $A$-module
coincides with its socle as an $R$-module. We have
$(Ae)_{\widehat{n-1}}=(R^*)^{\ot_R\,(n-1)}e\simeq
(R^*)^{\ot_R\,(n-1)}\ot_RRe$ as left $R$-modules, and since
$(R^*)^{\ot_R\,(n-1)}$ is an invertible $R$-bimodule, there is an
isomorphism of left $R$-modules ${\rm
soc}((R^*)^{\ot_R\,(n-1)}\ot_RRe)\simeq (R^*)^{\ot_R\,(n-1)}\ot_R\,
{\rm soc}(Re)$. We conclude that ${\rm soc}(Ae)\simeq
(R^*)^{\ot_R\,(n-1)}\ot_R\, {\rm soc}(Re)$ as left $R$-modules.

For $e=e_i$, these show that there are $R$-module isomorphisms ${\rm
top}(Ae_i)\simeq {\rm top}(Re_i)=S_i$ and $${\rm soc}(Ae_i)\simeq
(R^*)^{\ot_R\,(n-1)}\ot_R\, {\rm soc}(Re_i)\simeq
(R^*)^{\ot_R\,(n-1)}\ot_R\,S_{\pi^{-1}(i)}.$$ Using Lemma
\ref{lemaR*tensorsimplu}, the latter one gives ${\rm
soc}(Ae_i)\simeq S_{\pi^{n-2}(i)}$. As a consequence, ${\rm
top}(Ae_i)\simeq {\rm soc}(Ae_{\pi^{2-n}(i)})$ as $R$-modules, and
as $A$-modules as well, showing that the Nakayama permutation of the
quasi-Frobenius algebra $A$ is $\pi^{2-n}$. By \cite[Theorem
IV.6.2]{sy}, we conclude that $A$ is Frobenius if and only if
$m_i=m_{\pi^{n-2}(i)}$ for any $1\leq i\leq q$. \qed

\section{Some examples}

We have seen that $A(R,2)$ is symmetric for any $R$. In this section
we present several examples to illustrate the behaviour of algebras
of type $A(R,n)$ for $n\geq 3$, with respect to Frobenius type
properties. By Proposition A (ii), if $R$ is quasi-Frobenius, then
so is $A(R,n)$ for any $n$. We give below examples where $A(R,n)$ is
not Frobenius, and some others where $A(R,n)$ is Frobenius.

\begin{example} \label{ex1}
{\rm  Let $p,q$ be positive integers with $p\neq q$. Let $R$ be the
example of a quasi-Frobenius algebra which is not Frobenius,
described in \cite[Section 6]{dnn3}. $R$ has a basis
$$(E_{ij})_{1\leq i,j\leq p}, (X_{ir})_{1\leq i\leq p \atop 1\leq
r\leq q}, (Y_{ri})_{1\leq r\leq q \atop 1\leq i\leq p},
(F_{rt})_{1\leq r,t\leq q},$$ with relations \bea
E_{ij}E_{js}=E_{is},&
F_{rt}F_{tu}=F_{ru}\\
E_{ij}X_{jr}=X_{ir}, &X_{ir}F_{rt}=X_{it}\\
F_{rt}Y_{ti}=Y_{ri},& Y_{ri}E_{ij}=Y_{rj} \eea for any $1\leq
i,j,s\leq p$, $1\leq r,t,u\leq q$, and any other product of two
elements in the basis is zero. In the case where $p=2$ and $q=1$,
$R$ is Nakayama's example of a 9-dimensional algebra which is
quasi-Frobenius, but not Frobenius, see \cite[Example 16.19
(5)]{lam}.

For any $1\leq j\leq p$, let $U_j=\langle E_{ij},Y_{rj}|1\leq i\leq
p, 1\leq r\leq q\rangle$, and for any $1\leq t\leq q$, let
$U'_t=\langle X_{it},F_{rt}|1\leq i\leq p, 1\leq r\leq q\rangle$.
Then $R=(\oplus_{1\leq j\leq p}U_j) \oplus (\oplus_{1\leq t\leq
n}U'_t)$ is a decomposition into a direct sum of indecomposable left
$R$-modules, $U_1\simeq \ldots\simeq U_p$ and $U'_1\simeq \ldots
\simeq U'_q$, while $U_1$ and $U'_1$ are not isomorphic. Thus the
multiplicities of the two types of principal indecomposables are
$m_1=p$ and $m_2=q$. On the other hand, ${\rm top}(U_j)\simeq {\rm
soc}(U'_t)$ and ${\rm top}(U'_t)\simeq {\rm soc}(U_j)$, so the
Nakayama permutation $\pi$ is the transposition $(1\; 2)$.

By Proposition A (ii), $A(R,n)$ is quasi-Frobenius. As $m_1\neq
m_2$, it follows from Theorem F that $A(R,n)$ is Frobenius if $n$ is
even, while $A(R,n)$ is not Frobenius in the case where $n$ is odd.
In fact, for even $n$, $A(R,n)$ is even symmetric. Indeed, a basic
algebra of $R$ is $S=(E_{11}+F_{11})R(E_{11}+F_{11})$, and it is
showed in \cite[Proposition 4.4]{dnn2} that the class of $S^*$ in
the Picard groups of $S$ has order 2, and then by \cite[Corollary
3.4]{dnn2}, the class $R^*$ has also order 2 in ${\rm Pic}(R)$, thus
$R^*\ot_RR^*\simeq R$. Then $(R^*)^{\ot_R\, (n-2)}\simeq R$ for any
even $n$, thus $A(R,n)$ is symmetric. }
\end{example}

Before going to the next example, we note that by the tensor-Hom
adjunction,  for any finite-dimensional $R$-modules $M$ and $N$,
there is a natural isomorphism of $R$-bimodules $(N\ot_RM)^*\simeq
{\rm Hom}_{R-}(M,N^*)$. In particular, if $R$ is a
finite-dimensional algebra, taking $M=N=R^*$, we get an isomorphism
of $R$-bimodules

\begin{equation}\label{izoR*R*}
(R^*\ot_RR^*)^*\simeq {\rm Hom}_{R-}(R^*,R).
\end{equation}

The two examples below show that if $R$ is not quasi-Frobenius, then
$A(R,n)$ may not be quasi-Frobenius, but it is also possible that it
is even symmetric.

\begin{example}
{\rm Let $R=\small{\left(
\begin{array}{ll}K&K\\0&K\end{array}\right)}$ be the algebra of
upper triangular $2\times 2$ matrices. Denoting by $e,f$ the
diagonal matrix units and by $x$ the off-diagonal one, $R=\langle
e,f,x\rangle$, with relations $e^2=e,f^2=f,ex=xf=x$, and any other
product of two elements of the basis $\{e,f,x\}$ is zero. If $\{
e^*,f^*,x^*\}$ is the dual basis of $R^*$, the left and right
actions of $R$ on $R^*$ are given by
$$e\rhu e^*=e^*\lhu e=e^*, f\rhu f^*=f^*\lhu f=f^*,$$
$$f\rhu x^*=x^*\lhu e=x^*, x\rhu x^*=e^*, x^*\lhu x=f^*,$$
and any other action of an element in $\{e,f,x\}$ on an element in
the dual basis is zero.

The decomposition of the left $R$-module $R$ into a sum
indecomposables is $R =S\oplus P$, where $S=\langle e\rangle$ is a
simple module, and $P=\langle f,x\rangle$. The socle of $P$ is
$\langle x\rangle\simeq S$. An indecomposable decomposition of the
left $R$-module $R^*$ is $R^*=S'\oplus P'$, where $S'=\langle
f^*\rangle$ is a simple module not isomorphic to $S$, and
$P'=\langle e^*,x^*\rangle$, which is isomorphic to $P$ (an
isomorphism $P\ra P'$ takes $x$ to $e^*$, and $f$ to $x^*$). Note
that $S'$ is injective, as a direct summand of $R^*$. The Jacobson
radical of $R$ is $J(R)=\langle x\rangle$, and $P/J(R)P\simeq S'$,
thus $P\simeq P'$ is a projective cover of $S'$. Then by
\cite[Proposition 2.8]{lorenz}, $${\rm dim}_K\,{\rm
Hom}_R(P',R)=\mu(S',R){\rm dim}_K\,({\rm End}_R(S')),$$ where
$\mu(S',R)=1$ is the multiplicity of $S'$ as a composition factor of
$_RR$, and ${\rm End}_R(S')$ is the endomorphism algebra of $S'$. We
obtain that ${\rm Hom}_R(P',R)$ has dimension 1. We also see that
${\rm Hom}_R(S',R)=0$, since the socle of $_RR$ is $S\oplus \langle
x\rangle\simeq S^2$. Therefore ${\rm dim}_K\,{\rm Hom}_R(R^*,R)=1$.

Using (\ref{izoR*R*}), we see that ${\rm dim}_K(R^*\ot_RR^*)=1$. We
have $f^*\ot_Rx^*=(x^*\lhu x)\ot_Rx^*=x^*\ot_R(x\rhu
x^*)=x^*\ot_Re^*$, and it is easily checked that any other
$u^*\ot_Rv^*$ with $u^*,v^*\in \{ e^*,f^*,x^*\}$ is zero; for
example $e^*\ot_Re^*=e^*\ot_R(x\rhu x^*)=(e^*\lhu x)\ot_Rx^*=0$.
Therefore $\{ f^*\ot_Rx^*\}$ is a basis of $R^*\ot_RR^*$. As a
consequence, $R^*\ot_RR^*\ot_RR^*=0$. Indeed,
$f^*\ot_Rx^*\ot_Re^*=x^*\ot_Re^*\ot_Re^*=0$, and clearly
$f^*\ot_Rx^*\ot_Rf^*=f^*\ot_Rx^*\ot_Rx^*=0$.

Consider the $\mathbb{Z}_3$-graded algebra $A=A(R,3)$. We have
$A=R\oplus R^*\oplus (R^*\ot_RR^*)$. We will show that $A$ is
 not quasi-Frobenius.

If we denote $z=f^*\ot_Rx^*$, the left action of $R$ on
$R^*\ot_RR^*$ is given by $f\rhu z=z, e\rhu z=x\rhu z=0$, so
$R^*\ot_RR^*\simeq S'$ as left $R$-modules, while the right action
is $z\lhu e=z$, $z\lhu f=z\lhu x=0$.

We note that ${\rm Hom}_{R}(A,S')\simeq {\rm Hom}_R(S\oplus
P^2\oplus (S')^2,S')$ has dimension 4. Indeed, this follows from
${\rm Hom}_{R}(S,S')=0$, ${\rm Hom}_{R}(S',S')\simeq K$, and ${\rm
dim}_K{\rm Hom}_{R}(P,S')=1$; the latter holds true since $P$ is a
projective cover of $S'$.

Let $M=Ae=\langle e\rangle + \langle e^*,x^*\rangle + \langle
x^*\ot_Re^*\rangle$ and $N=Af=\langle f,x\rangle + \langle
f^*\rangle$, which are graded $A$-submodules of $A$ (the spaces
indicated in each sum are the homogeneous components; the
homogeneous component of degree $\hat{2}$ of $N$ is 0). We have
$A=M\oplus N$.

It is easily checked that $N$ is $\hat{1}$-faithful, so $N$ embeds
as an essential graded submodule into ${\rm
Coind}(N_{\hat{1}})(\hat{2})$. As $N_{\hat{1}}=S'$ is an injective
$R$-module,  ${\rm Coind}(N_{\hat{1}})(\hat{2})$ is injective, thus
an injective envelope of $N$ in the category of graded $A$-modules.
Now ${\rm dim}_K\,  {\rm Coind}(N_{\hat{1}})(\hat{2})={\rm dim}_K\,
{\rm Hom}_{R-}(A,S')=4$, and ${\rm dim}_K\, N=3$, showing that $N$
is not injective. We conclude that neither $A$ is injective, so $A$
is not quasi-Frobenius. }
\end{example}

\begin{example}  \label{ex3}
{\rm Let $R=\small{\left(
\begin{array}{ll}K&K^2\\0&K\end{array}\right)}$ be the generalized matrix algebra associated with the $K$-bimodule $K^2$. If $e,f$ are the
diagonal matrix units and $x,y$ are the matrices corresponding to
the elements of the canonical basis of the off-diagonal position,
then $R=\langle e,f,x,y\rangle$, with relations
$e^2=e,f^2=f,ex=xf=x, ey=yf=y$, and any other product of two
elements of the basis $\{e,f,x,y\}$ is zero.

The decomposition of the left $R$-module $R$ into a sum
indecomposables is $R =Re\oplus Rf$, and $Re=\langle e\rangle$,
$Rf=\langle f,x,y\rangle$. The socle of the principal indecomposable
$Rf$ is not simple; indeed, this socle is $\langle x,y\rangle
=\langle x\rangle \oplus \langle y\rangle$, a sum of two simples. By
\cite[Theorem 16.4]{lam}, $R$ is not quasi-Frobenius.

 If $\{ e^*,f^*,x^*,y^*\}$
is the dual basis of $R^*$, the left action of $R$ on $R^*$ is given
by
$$e\rhu e^*=e^*, f\rhu f^*=f^*,$$
$$f\rhu x^*=x^*, x\rhu x^*=e^*, f\rhu y^*=y^*, y\rhu y^*=e^*,$$
and any other action of an element in $\{e,f,x,y\}$ on an element in
the dual basis is zero.

We show that ${\rm Hom}_{R-}(R^*,R)=0$. Indeed, if $\omega\in {\rm
Hom}_{R-}(R^*,R)$, then $\omega(x^*)=\omega(f\rhu
x^*)=f\omega(x^*)\in fR=\langle f\rangle$, and similarly
$\omega(y^*)\in \langle f\rangle$.
 Hence $\omega(e^*)=\omega(x\rhu x^*)=x\omega(x^*)\in x\langle f\rangle
\subset \langle x\rangle$, and similarly $\omega(e^*)\in  \langle
y\rangle$, so then $\omega(e^*)\in \langle x\rangle \cap \langle
y\rangle=0$, thus $\omega(e^*)=0$.

Now $x\omega(x^*)=\omega(x\rhu x^*)=\omega(e^*)=0$, showing that
$\omega(x^*)\in \lhu e,x,y\rhu$, and then
$\omega(x^*)=e\omega(x^*)$. As $e\omega(x^*)=\omega(e\rhu x^*)=0$,
we get $\omega(x^*)=0$. Similarly, $\omega(y^*)=0$.

Finally, $\omega(f^*)=\omega(f\rhu f^*)=f\omega(f^*)$, so
$\omega(f^*)\in \langle f\rangle$. Then
$\omega(f^*)=x\omega(f^*)=\omega(x\rhu f^*)=0$. We conclude that
$\omega =0$.

Now we use (\ref{izoR*R*}) and obtain that $R^*\ot_RR^*=0$. Then
$A(3,R)=R\oplus R^*$ is precisely the trivial extension $R\cosmash
R^*$, which is a symmetric algebra.
 }
\end{example}

Using Example \ref{ex3}, we can produce for any $n\geq 3$ an example
of an algebra $T$ which is not quasi-Frobenius, such that $A(T,n)$
is symmetric and $(T^*)^{\ot_T\,(n-1)}\neq 0$.

\begin{example}
{\rm Let $n\geq 3$ and let $R$ be the algebra from Example
\ref{ex3}. Let $S$ be a quasi-Frobenius algebra such that $A(S,n)$
is symmetric, for example one such that $(S^*)^{\ot_S\, (n-2)}\simeq
S$. Then $T=R\times S$ is not quasi-Frobenius and one can easily
check that $(T^*)^{\ot_T\, j}\simeq (R^*)^{\ot_R\, j}\times
(S^*)^{\ot_S\, j}$ as $T$-bimodules for any $j$, where the actions
of $R$ on $(S^*)^{\ot_S\, j}$, as well as the ones of $S$ on
$(R^*)^{\ot_R\, j}$ are trivial. It follows that there is an algebra
isomorphism $A(T,n)\simeq A(R,n)\times A(S,n)$. As both $A(R,n)$ and
$A(S,n)$ are symmetric, so is $A(T,n)$. Since
$(S^*)^{\ot_S\,(n-1)}\neq 0$, we see that $(T^*)^{\ot_T\,(n-1)}\neq
0$, too. }
\end{example}

The following produces a class of symmetric algebras from any
finite-dimensional Hopf algebra.

\begin{example}
{\rm Let $H$ be a finite-dimensional Hopf algebra with antipode $S$.
Let $[S^2]$ be the class of $S^2$ in ${\rm Out}(H)={\rm Aut}(H)/{\rm
Inn}(H)$, the group of outer automorphisms of the algebra $H$. The
order $o([S^2])$ of $[S^2]$ in this group is the least positive
integer $\ell$ such that $S^{2\ell}$ is inner. Let also
$\mathcal{G}$ be the distinguished grouplike element in $H^*$, and
denote by $o(\mathcal{G})$ its order in the group of grouplike
elements of $H^*$. It was proved in \cite[Theorem 2.9]{dnn2} that
the order of the class of the $H$-bimodule $H^*$ in the Picard group
of $H$ is the least common multiple of $o([S^2])$ and
$o(\mathcal{G})$.

For any integer $n\geq 2$, $A(H,n)$ is a Frobenius algebra by
Theorem D (ii). By Corollary E, it is a symmetric algebra if and
only if both $o([S^2])$ and $o(\mathcal{G})$ divide $n-2$. }
\end{example}

{\bf Acknowledgement.} The first author was supported by the PNRR
grant CF 44/14.11.2022 {\it Cohomological Hall algebras of smooth
surfaces and applications}.


\begin{thebibliography}{99}







\bibitem{dnn1} S. D\u{a}sc\u{a}lescu, C. N\u{a}st\u{a}sescu and L.
N\u{a}st\u{a}sescu, Frobenius algebras of corepresentations and
group graded vector spaces, J. Algebra {\bf 406} (2014), 226-250.

\bibitem{dnn3} S. D\u{a}sc\u{a}lescu, C. N\u{a}st\u{a}sescu and L.
N\u{a}st\u{a}sescu, Graded (quasi-)Frobenius rings, J. Algebra {\bf
620} (2023), 392-424.


\bibitem{dnn2} S. D\u{a}sc\u{a}lescu, C. N\u{a}st\u{a}sescu and L.
N\u{a}st\u{a}sescu, Picard groups of quasi-Frobenius algebras and a
question on Frobenius strongly graded algebras, arXiv:2309.10169, to
appear in Publicacions Matem${\rm \grave{a}}$tiques.

\bibitem{fgr} R. M. Fossum, P. A. Griffith, I. Reiten, Trivial
extensions of abelian categories, Lecture Notes. Math. {\bf 456},
Springer Verlag (1975).

\bibitem{HeXia} J. W. He, X. J. Xia, Constructions of graded
Frobenius algebras, J. Algebra Appl. {\bf 19} (2020), 2050081.

\bibitem{lam} T. Y. Lam, Lectures on modules and rings, GTM {\bf
189}, Springer Verlag, 1999.

\bibitem{lorenz} M. Lorenz, A tour of representation theory, AMS
Graduate Studies in Mathematics {\bf 193} (2018).



\bibitem{nvo} C. N\u{a}st\u{a}sescu and F. van Oystaeyen, Methods
of graded rings, Lecture Notes in Math., vol. 1836 (2004), Springer
Verlag.

\bibitem{palmer} I. Palm${\rm \acute{e}}$r, The global homological
dimension of semi-trivial extensions of rings, Math. Scand. {\bf 37}
(1975), 223-256.

\bibitem{sy} A. Skowro\'{n}ski, K. Yamagata, Frobenius algebras I.
Basic representation theory, European Mathematical Society, 2012.

\bibitem{smith} S. P. Smith, Some finite dimensional algebras
related to elliptic curves, in: Proceedings Workshop Representation
Theory Mexico, 1998, 315-348.

\bibitem{takeuchi} M. Takeuchi, $\sqrt{\mathrm{Morita}}$ theory - formal ring laws and monoidal equivalences of categories
of bimodules, J. Math. Soc. Japan {\bf 39} (1987), 301-336.

\end{thebibliography}
\end{document}